\newtheorem{theorem}{Theorem}
\newtheorem{proposition}[theorem]{Proposition}
\newtheorem{lemma}[theorem]{Lemma}
\newtheorem{definition}[theorem]{Definition}
\newtheorem{corollary}[theorem]{Corollary}
\newtheorem{remark}[theorem]{Remark}
\newtheorem*{remark*}{Remark}
\numberwithin{theorem}{section}
\newcommand{\mylabel}[2]{#2\def\@currentlabel{#2}\label{#1}}
\newcommand{\HH}{\mathcal{H}}
\newcommand{\VV}{\mathcal{V}}
\newcommand{\FF}{\mathcal{F}}
\newcommand{\Ret}{\operatorname{Ret}}
\newcommand{\Exp}{\operatorname{Exp}}
\title[Sub-Riemannian Random Walks]{Sub-Riemannian Random Walks --\\ From  Connections to Retractions}
\author{Michael Herrmann}
\address{Institute for Partial Differential Equations \\Technische Universit{\"a}t Braunschweig \\
	Universit{\"a}splatz 2, 38106 Braunschweig \\
	Germany}
\email{michael.herrmann@tu-braunschweig.de}
\author{Pit Neumann}
\address{Institute for Numerical and Applied Mathematics\\
	University of G{\"o}ttingen \\
	Lotzestra{\ss}e 16-18 \\ 37083 G{\"o}ttingen\\
	Germany}
\email{pit.neumann@stud.uni-goettingen.de}
\author{Simon Schwarz}
\address{Institute for Mathematical Stochastics\\ 
	University of G{\"o}tttingen\\
	Goldschmidtstra{\ss}e 7\\ 37077 G{\"o}ttingen\\
	Germany}
\email{simon.schwarz@uni-goettingen.de}
\author{Anja Sturm}
\address{Institute for Mathematical Stochastics\\ 
	University of G{\"o}ttingen\\
	Goldschmidtstra{\ss}e 7\\ 37077 G{\"o}ttingen\\
	Germany}
\email{anja.sturm@mathematik.uni-goettingen.de}
\author{Max Wardetzky}
\address{Institute for Numerical and Applied Mathematics\\
	University of G{\"o}ttingen \\
	Lotzestra{\ss}e 16-18 \\ 37083 G{\"o}ttingen\\
	Germany}
\email{wardetzky@math.uni-goettingen.de}
\thanks{The authors acknowledge financial support by the Deutsche Forschungsgemeinschaft (DFG, German Research Foundation) via project A02 of the Collaborative Research Center 1456.}
\begin{document}
	
	
	\keywords{sub-Riemannian geometry, horizontal Brownian motion, retractions}
	
\begin{abstract}
We study random walks on sub-Riemannian manifolds using the framework of retractions, i.e., approximations of normal geodesics.  We show that such walks converge to the correct horizontal Brownian motion if normal geodesics are approximated to at least second order. In particular, we (i) provide conditions for convergence of geodesic random walks defined with respect to normal, compatible, and partial connections and (ii) provide examples of computationally efficient retractions, e.g., for simulating anisotropic Brownian motion on Riemannian~manifolds. 
\end{abstract}
\vspace*{-1cm}
\maketitle
\vspace*{-2ex}
	
\section{Introduction}\label{sec:intro}
Given a smooth, compact, $n$-dimensional Riemannian manifold $(M,g)$ without boundary, consider the following simple algorithm. Pick an initial position $x\in M$, fix $\varepsilon>0$, and keep repeating the following two steps of sampling (S) and walking (W):

\begin{enumerate}
\item[(S)] Sample a unit tangent vector $u \in \mathbb{S}_{x}:=\{u\in T_{x}M : \lVert u\rVert_g = 1\}$ \emph{uniformly} with respect to the Riemannian metric $g$.
\item[(W)] Follow the unique geodesic with initial conditions $(x,u)$ for time $\varepsilon \sqrt{n}$, and update $x$ to be the endpoint of this geodesic. 
\end{enumerate}
A famous result by J{\o}rgensen~\cite{jorgensen1975}  states that the resulting \emph{geodesic random walk} converges to the Brownian motion on $(M,g)$ as $\varepsilon\to0$.

Does a similar result hold in the \emph{sub-Riemannian} case? Recall that
a sub-Riemannian structure $(\HH, h)$ of a smooth  $n$-dimensional manifold $M$ consists of a smooth linear subbundle $\HH\subset TM$ of constant rank $k\leq n$ and a smoothly varying positive definite quadratic form  $h$ on $\HH$. Notice that $h(X,Y)$ is defined for \emph{horizontal vector fields} $X,Y\in \Gamma(\HH)$ only; there is no metric information for vectors transversal to $\HH$ if $k<n$.

Trying to mimic the above algorithm in the sub-Riemannian case poses (at least) two challenges for walking and sampling: (i) With respect to walking, sub-Riemannian geodesics starting at a point $x$ are in general not uniquely determined by an initial (horizontal) vector $u \in \HH_{x}$. Rather, so-called \emph{normal geodesics} are determined by a covector $\alpha \in T^{*}_{x}M$. 
(ii) With respect to sampling, while it is certainly possible to uniformly sample a  (horizontal) unit vector $u \in \HH_{x}$ with respect to the sub-Riemannian metric $h$, there is no canonical way of constructing a unique covector $\alpha \in T^{*}_{x}M$ from $u$ (required for walking along normal geodesics).

Resolving these challenges in order to obtain \emph{horizontal geodesic random walks} requires to make an additional choice. One possible  choice concerns a \emph{complement} $\VV$ of $\HH$, i.e., a smooth linear subbundle $\VV\subset TM$ such that $\VV\cap \HH = \{0\}$ and $TM = \HH \oplus \VV$. While there exist many choices of complements $\VV$, some sub-Riemannian structures admit natural choices, such as the Reeb vector field for contact structures or the vertical bundle for frame bundles. When not stated otherwise, we will assume that some choice of $\VV$ is given.

Under such a choice of $\VV$, one can adapt the above algorithm to the sub-Riemannian case as follows. Pick an initial position $x\in M$, fix $\varepsilon>0$, and keep repeating the following two steps:

\begin{enumerate}
\item[\mylabel{sR-sample}{($\tilde{\mathrm{S}}$)}] Sample a unit horizontal vector $u \in \mathbb{S}_{x}:=\{u\in \HH_{x} : \lVert u\rVert_h = 1\}$ \emph{uniformly} with respect to the sub-Riemannian metric $h$. \item[\mylabel{sR-walk}{($\tilde{\mathrm{W}}$)}] 
Let $\alpha_{u} \in T^{*}_{x}M$  be the unique covector satisfying  $\alpha_{u}(\tilde u) = h(u,\tilde u)$ for all $\tilde u \in \HH_{x}$ and $\alpha_{u}(v) = 0$ for all $v\in \VV_{x}$. Follow the unique normal geodesic with initial conditions $(x,\alpha_{u})$ for time $\varepsilon \sqrt{k}$, and update $x$ to be the endpoint of this geodesic. 
\end{enumerate}

For bracket-generating sub-Riemannian structures, it was shown in \cite{BOSCAIN2017} (see also \cite{agrachev2018,gordina2017convergence}) that the horizontal geodesic walk arising from~\ref{sR-sample} and~\ref{sR-walk} indeed converges to a respective \emph{horizontal Brownian motion generated by a $\VV$-dependent} \emph{sub-Laplacian}.
One could let the story end with this convergence result. We decided to let our story begin here.

The convergence results in \cite{BOSCAIN2017} pertain to random walks along \emph{normal geodesics}. We are interested in an extension of this picture to random walks that possibly arise from different setups.
Our motivation for investigating such an extension is twofold. On the one hand, normal geodesics arise naturally in the sub-Riemannian setting. However, geodesics arising from \emph{normal}, \emph{compatible}, or \emph{partial} connections (all of which are natural connections associated with sub-Riemannian structures; see, e.g.,~\cite{CHENG2021112387,Molina-Grong-2017,Grong2020AffineCA,,Grong2022-notes,Grong-Thalmaier2019,junne2023invariance,LANGEROCK2003203}) do \emph{not} in general agree with normal geodesics. The fact that different affine connections can lead to different geodesics, may result in different
limiting processes for the resulting geodesic random walks. We give conditions under which these processes agree with the correct horizontal Brownian motion. Notice that horizontal Brownian motion is generated by an attendant sub-Laplacian, and thus depends on the choices of $\HH$, $h$, and $\VV$.

Another source of motivation for asking for alternatives of random walks along normal geodesics arises from a computational perspective: Computing geodesics is computationally costly in general. We ask for walks that are computationally efficient, while still providing the correct 
limiting processes, thereby extending the results of \cite{schwarz2023efficient} from the Riemannian to the sub-Riemannian setting.

We combine these two strands of motivation under the roof of \emph{retraction-based} random walks. We show that if normal geodesics are approximated to at least second order -- i.e., by what we call \emph{second-order retractions} --, then the \emph{generators} of the resulting stochastic processes converge to the correct (horizontal) sub-Laplacian, i.e., to the generator of horizontal Brownian motion; see Theorem~\ref{thm:convergence-generators-ret2}.  As a consequence we infer that if the sub-Riemannian structure is bracket-generating, then the stochastic processes themselves converge in distribution to horizontal Brownian motion; see Theorem~\ref{thm:convergence-processes-ret2}. Notice that without this condition, convergence of generators might not imply convergence of the resulting processes.

Our exposition is structured as follows. First, we recall basic notions of sub-Riemannian geometry in Section~\ref{sec:basics-sub-Riemannian}. We then introduce retraction-based random walks and discuss their convergence to the correct horizontal Brownian motion in Section~\ref{sec:rb-rw}. In Section \ref{sec:normal-compatible} we discuss the relationship between normal, compatible and partial connections, and we show that (certain) compatible connections indeed yield second-order retractions. As an example we consider a certain compatible connection on the frame bundle over a smooth manifold  $M$ with given affine connection in Section~\ref{sec:frame-bundles}. Using the frame bundle, one can generalize the Eells--Elworthy--Malliavin construction of Brownian motion to \emph{anisotropic} Brownian motion, see, e.g.,~\cite{grong2022,Sommer-Svane2017}.
Finally, in Section ~\ref{sec:algo}, we discuss second-order retractions that yield computationally efficient algorithms.
	
	\section{Basic Notions for sub-Riemannian Structures}\label{sec:sR-basics}\label{sec:basics-sub-Riemannian}
	Throughout our exposition we let $M$ denote a smooth and connected manifold without boundary. We will often additionally assume that $M$ is compact. A \emph{sub-Riemannian structure} $(\HH, h)$ on $M$ consists of a smooth linear subbundle $\HH\subset TM$ of constant rank $k\leq n$, and a smoothly varying positive definite quadratic form  $h$ on $\HH$. 
	 Sections $X\in \Gamma(\HH)$ are called \emph{horizontal} vector fields. Notice that $h(X,Y)$ is defined only for horizontal vector fields; there is no metric information for vectors transversal to $\HH$ if $k<n$. Smooth curves $\gamma:[a,b]\to M$ with $\dot\gamma(t) \in \HH$ for all $t$ are called \emph{horizontal curves}. The \emph{length} of a horizontal curve is defined by
	$$
	\mathrm{len}(\gamma) := \int_{a}^{b}h(\dot \gamma, \dot \gamma)^{\frac 12}\, dt \ .
	$$
	The infimum over the lengths over all (piecewise) smooth horizontal curves connecting two points in $M$ gives rise to the Carnot--Carath\'eodory distance $d_{\HH}$. Notice that, in general, this distance can be infinite: E.g., according to Frobenius' theorem, if the commutator $[X,Y]$ of any pair of horizontal vector fields is again horizontal, then $\HH$ is integrable, i.e., $\HH$ foliates $M$ with $k$-dimensional leaves. In this situation,  points lying in different leaves have infinite distance since horizontal curves are bound to stay within single leaves. On the other end of the spectrum stands the assumption that $\HH$ 
	is \emph{bracket-generating} (also known as $\HH$ satisfying H\"ormander's condition), meaning that the horizontal vector fields along with all of their (nested) commutators span all of $TM$. Then the Chow--Rashevskii theorem asserts that $d_{\HH}$ is indeed finite and thus turns $M$ into a metric space. In other words, if H\"ormander's condition is satisfied, then any two points can be connected by a horizontal curve. 
For a comprehensive overview of sub-Riemannian structures, we refer to the surveys~\cite{agrachev_barilari_boscain_2019,montgomery2002tour,Strichartz}.

\subsection{Normal geodesics}\label{sec:normal-geodesics}
As in the Riemannian setting, minimizers of the length functional among all (sufficiently regular) curves connecting two endpoints $x, y \in M$ are called (horizontal) \emph{geodesics}. Different from the Riemannian setting, though, sub-Riemannian geodesics are in general not uniquely  determined by an initial point $x\in M$ and an initial horizontal velocity $u \in \HH_{x} \subset T_{x}M$. This can be seen by a simple dimension counting argument considering the Chow--Rashevskii theorem. Nonetheless, so-called \emph{normal} sub-Riemannian geodesics are uniquely determined by an initial point $x\in M$  and an initial \emph{covector} $\alpha \in T^{*}_{x}M$. We briefly review the corresponding construction. A sub-Riemannian structure gives rise to a smooth linear mapping 
	\begin{align*}
	\sharp: \Gamma(T^{*}M) &\to \Gamma(\HH) \ , \quad \alpha \mapsto \alpha^{\sharp} \ , \quad \text{where $\alpha^{\sharp}$ satisfies}\\
	h(\alpha^{\sharp} , X) &= \alpha(X)  \quad \text{for all $X \in \Gamma(\HH)$} \ .
	\end{align*}
	Notice that if $\HH \neq TM$, then the operator ${\sharp}$ has a non-trivial kernel. Nonetheless, this operator gives rise to a (degenerate if $\HH \neq TM$) \emph{cometric} $g$ on $T^{*}M$ via
	$$
	g(\alpha, \beta) := h(\alpha^{\sharp}, \beta^{\sharp}) \ .
	$$
	This cometric, in turn, yields a \emph{Hamiltonian} 
	$$
	H: T^{*}M \to \mathbb{R} \ , \quad H(\alpha) := \frac 12 g(\alpha,\alpha) \ .
	$$
	The canonical symplectic form on $T^{*}M$ then yields the corresponding Hamiltonian vector field $X_{H} \in \Gamma (T \,T^{*}M)$, and projections of the flow lines of $X_{H}$ back to $M$ give rise to normal geodesics:
\begin{definition}[Normal geodesics]
	A curve $\gamma:[a,b]\to M$ is called a \emph{normal} sub-Riemannian geodesic if and only if there exists a curve $\alpha :[a,b]\to T^{*}M$ with $\dot\alpha(t) = X_{H}|_{\alpha(t)}$ and $\gamma = \pi_{M}\circ \alpha$, where $\pi_{M}:T^{*}M \to M$ is the bundle projection.
\end{definition}
	In canonical coordinates the curve $\alpha(t) = (x^{i}(t), p_{i}(t))$ appearing in the previous definition satisfies the following system of Hamiltonian differential equations:
	\begin{align}\label{eq:normal-geod-Hamiltonian-1}
	\begin{split}
	\dot x^{i}(t) = \frac{\partial H}{\partial p_{i}}(t) &\iff \dot x^{i}(t)= g^{ij}(x(t))\,p_{j}(t) \ , \\
	\quad \dot p_{i}(t) = - \frac{\partial H}{\partial x^{i}}(t) &\iff\dot p_{i}(t)= -\Gamma^{jk}_{i}(x(t))\, p_{j}(t)p_{k}(t) \ , 
	\end{split}
	\end{align}
	where we have used the Einstein summation convention and one defines 
	\begin{align}\label{eq:normal-geod-Hamiltonian-2}
	g^{ij}:= g(dx^{i}, dx^{j}) \quad \text{and} \quad \Gamma^{ij}_{k}:= \frac 12 \frac{\partial g^{ij}}{\partial x^{k}} \ .
	\end{align}
	Notice that the first equation in the above Hamiltonian system  is equivalent to $\dot\gamma(t) = \alpha^{\sharp}(t)$ for the curve $\gamma = \pi_{M}\circ \alpha$; thus $\gamma$ is indeed horizontal. By construction, normal geodesics are determined by initial data $(x_{0}, \alpha_{0})$ with $x_{0}\in M$ and $\alpha_{0} \in T^{*}_{x_{0}}M$. 
\begin{remark}	
While all normal geodesics are locally length minimizing, not all length minimizing horizontal curves are necessarily normal geodesics in general. For our purposes, however, it will suffice to consider normal geodesics. 
\end{remark}

As mentioned before, the choice of a \emph{complement} $\VV$ for $\HH$, i.e., a smooth linear subbundle $\VV\subset TM$ such that $\VV\cap \HH = \{0\}$ and $TM = \HH \oplus \VV$, will be central for our exposition. In the sequel we assume that some choice of a complement $\VV$ has been made. We record the following definition:

\begin{definition}[Normal geodesics with horizontal initial conditions]\label{def:hor-init-cond}
		Let $(\HH,h)$ be a sub-Rie\-mann\-ian structure with complement $\VV$. We say that a normal geodesic $\gamma: [0,\varepsilon) \to M$ has \emph{horizontal} initial conditions $(x,u)$, with $x \in M$ and $u \in \HH_{x}$, if $\gamma$ is the (unique) normal geodesic with initial conditions $x\in M$ and $\alpha_{u} \in T_{x}^{*}M$, where  $\alpha_{u}$ is defined by $\alpha_{u}^{\sharp} = u$ and $\alpha_{u}|_{\VV} = 0$.
	\end{definition}

	\subsection{Horizontal Lie derivatives, sub-Laplacians, and Brownian motion} 
A sub-Riemannian structure $(\HH,h)$  together with a complement $\VV$ yield the notions of horizontal Lie derivatives, horizontal  divergence, sub-Laplacians, and horizontal  Brownian motion:

	\begin{definition}[Horizontal Lie derivative]\label{def:hor-Lie}
		Let $(\HH,h)$ be a sub-Riemannian structure with complement $\VV$. Let $A$ be a $(0,m)$-tensor on $M$, let $X_{1}, \dots, X_{m} \in \Gamma(\HH)$ be horizontal vector fields, and let $Y\in \Gamma(TM)$ be an arbitrary vector field. Define the \emph{horizontal Lie derivative} of $A$ by
		\begin{align}\label{eq:hor-Lie}
		\left(\mathcal{L}_{Y}^{\VV}A\right)(X_{1}, \dots, X_{m}):= Y(A(X_{1}, \dots, X_{m})) - \sum_{i=1}^{m} A(X_{1}, \dots, \mathrm{pr}_{\HH}[Y, X_{i}], \dots, X_{m}) \ ,
		\end{align}
		where $\mathrm{pr}_{\HH}$ denotes the projection to $\HH$ (which depends on the choice of $\VV$). Notice that this definition makes sense also for ``horizontal'' $(0,m)$-tensors, i.e., those $A$ for which $A(X_{1}, \dots, X_{m})$ is \emph{only} known for horizontal $X_{i}$. 
	\end{definition}
	
	Using the notion of horizontal Lie derivatives, one can define horizontal divergence and sub-Laplacians. In order to define the former, let $(\HH,h)$ be a sub-Riemannian structure, and let $\omega_{\HH}$ be a local ``horizontal volume form'' on $\HH$, i.e., a horizontal $k$-form that satisfies 
	\begin{align}\label{eq:hor-volume-form}
	\omega_{\HH}(E_{1}, \dots, E_{k}) \in \{-1,1\}
	\end{align}
	for all (local) orthonormal horizontal frames. (Notice that as long as the value of $\omega_{\HH}(E_{1}, \dots, E_{k})$ is locally constant, we do not care about the sign since we do not assume any notion of orientation of $\HH$. Notice also that $\omega_{\HH}$ is unique up to sign.) Following~\cite{BOSCAIN2017}, we then make the following definitions:
	
	\begin{definition}[Horizontal divergence and sub-Laplacian]\label{def:hor-div-and-sub-Lap-hor}
		Let $(\HH,h)$ be a sub-Riemannian structure with complement $\VV$, let $X \in \Gamma(TM)$, and let $\omega_{\HH}$ be defined by~\eqref{eq:hor-volume-form}. Then the \emph{horizontal divergence} of $X$ is the function $\mathrm{div}^{\VV}(X)$ defined by
		\begin{align}\label{eq:hor-div}
		\mathcal{L}_{X}^{\VV}\omega_{\HH} = \mathrm{div}^{\VV}(X) \, \omega_{\HH} \ .
		\end{align}
		Moreover, define the \emph{sub-Laplacian} by 
		\begin{align}\label{eq:hor-Lap}
		\Delta^{\VV} f := \mathrm{div}^{\VV}((df)^{\sharp}) \ .
		\end{align}
	\end{definition}
	
If the sub-Riemannian structure $(\HH, h)$ is bracket-generating, every sub-Laplacian of the form $\Delta^{\VV}$ generates a horizontal diffusion process $X_{t}^{\VV}$, see \cite{BOSCAIN2017,habermann2017,Strichartz}.
This diffusion corresponds to (horizontal) \emph{Brownian motion}, which not only depends on $(\HH, h)$ but also on the choice of a complement $\VV$, just like the sub-Laplacian $\Delta^{\VV}$ itself.
Sub-Laplacians play a central role in the next section in the context of random walks. 
	
\section{Retraction-based Random Walks}\label{sec:rb-rw}

Recall the (horizontal) geodesic random walk resulting from repeating Steps \ref{sR-sample} and \ref{sR-walk} in Section~\ref{sec:intro}. It was shown in~\cite{BOSCAIN2017} that these walks satisfy a functional central limit theorem, i.e., converge to the horizontal Brownian motion corresponding to the sub-Laplacian $\Delta^{\VV}$ as $\varepsilon \to 0$. In this section we extend the concept of (horizontal) geodesic random walks to \emph{retraction-based} random walks. Retractions were originally introduced within the context of optimization on Riemannian manifolds as efficient approximations of the exponential map, see, e.g., \cite{absil2008,absil2012,adler2002,bonnabel2013}.
	Extending retractions to sub-Riemannian manifolds offers a natural framework for examining the limit of geodesic random walks constructed with respect to different connections.
	
\begin{definition}[Horizontal retractions]\label{def:sub-Riemannian-retraction}
Let $(\HH, h)$ be a sub-Riemannian structure on $M$. A \emph{horizontal retraction} is a smooth map $\Ret: \HH \to M$ that for all $x\in M$ and all $u\in \HH_{x}$ satisfies
		\begin{align*}
		\Ret_x(0) = x \quad \text{and} \quad \frac{d}{dt}\Ret_x (t u) \big\lvert_{t=0} = u  \ ,
		\end{align*}
		where $\Ret_x$  denotes the restriction to $\HH_x$. 
\end{definition}

Retraction can be used to extend the notion of geodesic random walks:

\begin{definition}[Retraction-based random walk]\label{def:ret-rand-walk}
Let $(\HH, h)$ be a sub-Riemannian structure on $M$ with horizontal retraction $\Ret$. A \emph{retraction-based random walk} refers to the following algorithm. Pick an initial position $x\in M$, fix $\varepsilon>0$, and keep repeating the following two steps:
	
\begin{enumerate}
\item[\mylabel{Ret-sample}{($\tilde{\mathrm{S}}$)}] Sample a unit horizontal vector $u \in \mathbb{S}_{x}:=\{u\in \HH_{x} : \lVert u\rVert_h = 1\}$ \emph{uniformly} with respect to the sub-Riemannian metric $h$.
\item[\mylabel{Ret-walk}{($\tilde{\mathrm{R}}$)}] Update $x$ according to $x\leftarrow \Ret_{x}(\varepsilon \sqrt{k} u)$. 
\end{enumerate}
\end{definition}

Notice that (i) the sampling step \ref{Ret-sample} is the same as the sampling step for (horizontal) geodesic random walks and that (ii) the walking step \ref{Ret-walk} is the same as the walking step \ref{sR-walk} when  $\Ret$ is replaced by $\Exp$, where the horizontal exponential map  $\Exp_{x}(tu)$ is defined by following the (unique) normal geodesic with horizontal initial conditions $(x,u)$ for time $t$ (see Definition~\ref{def:hor-init-cond}). Notice furthermore that (thus far) the choice of a complement $\VV$ is absent from our definition of retraction-based random walks. Therefore, quite evidently, one cannot expect that \emph{every} retraction-based random walk converges to the correct horizontal Brownian motion as $\varepsilon\to 0$. In order to fix this, we relate retraction-based random walks to normal geodesics:
	
	\begin{definition}[Second-order horizontal retractions]\label{def:second-order-agreement}
		Let $\gamma, \tilde \gamma: (-\delta, \delta) \to M$ be two smooth curves in $M$ with $\gamma(0)= \tilde\gamma(0)$ and $\dot\gamma(0)= \dot{\tilde\gamma}(0)$. Let $x(t)$ and $\tilde x(t)$ be the coordinate representation of $\gamma$ and $\tilde\gamma$, respectively,  in some local coordinate chart around $\gamma(0)$. If 
		$$
		\frac{d^{2}}{dt^{2}} x(t) \big\lvert_{t=0} = \frac{d^{2}}{dt^{2}} \tilde x(t) \big\lvert_{t=0}  \ ,
		$$
		then we say that $\gamma$ and $\tilde \gamma$ \emph{agree up to second order} at $t=0$.
		
		Given a complement $\VV$ of $\HH$, we say that a retraction
		$\Ret$ is a \emph{second-order} horizontal retraction if the curve $t \mapsto \Ret_x (t u)$ agrees with the (unique) normal geodesic $\gamma$ with horizontal initial conditions $(x,u)$ (see Definition~\ref{def:hor-init-cond}) up to second order at $t=0$ for all $x \in M$ and all $u\in \HH_{x}$.
	\end{definition}
	
	\begin{remark}\label{rem:second-order-agreement-connection}
		It is easy to verify that two curves $\gamma$ and $\tilde \gamma$ with $\gamma(0)= \tilde\gamma(0)$ and $\dot\gamma(0)= \dot{\tilde\gamma}(0)$ agree up to second order at $t=0$ if and only if $\nabla_{\dot{\gamma} }\dot{\gamma}\rvert_{t=0} = \nabla_{\dot{\tilde\gamma} }\dot{\tilde\gamma}\rvert_{t=0}$ for one (and hence every) affine connection $\nabla$ on $M$. This fact will be important in the next section.
	\end{remark} 

Using second-order retractions, we now turn to the question of convergence of retraction-based random walks to horizontal Brownian motion.	
	We first deal with convergence of the infinitesimal generators and in a second step with convergence of the attendant stochastic processes. 
	For the former, consider the following operator: Given a sub-Riemannian structure $(\HH, h)$  and a (not necessarily second-order) horizontal retraction $\Ret:\HH\to M$, let
	\begin{equation*}
	L^{\varepsilon}_{\Ret} (f) := \frac{1}{\varepsilon^{2}} \left ( U^{\varepsilon}_{\Ret} (f) - f \right)
	\end{equation*}
	for any $f\in C^0 (M;\mathbb{R})$, where
	\begin{equation*}
	\left(U^{\varepsilon}_{\Ret} (f)\right) (x) := \frac{1}{\omega_{k}}\int_{\mathbb{S}_{x}} f\left (\Ret_{x}\left(\sqrt{k}\, \varepsilon u\right)\right) d u 
	\end{equation*}
	is the transition operator for the walking Step \ref{Ret-walk} in Definition~\ref{def:ret-rand-walk} and $\omega_{k}$ the volume of the unit sphere $\mathbb{S}_{x}\subset \HH_{x}$.
	We then have the following result:
	
	\begin{theorem}\label{thm:convergence-generators-ret2}
Let $M$ be compact, let $(\HH,h)$ be a sub-Riemannian structure on $M$ with complement $\VV$, and let $f \in C^{\infty}(M;\mathbb{R})$. Consider a second-order horizontal retraction $\Ret: \HH \to M$. Then 
		\begin{align}\label{eq:conv-generators-ret}
		\lim_{\varepsilon \to 0}L^{\varepsilon}_{\Ret} (f)  = \Delta^{\VV}f \ ,
		\end{align}
uniformly on $M$, where $\Delta^{\VV}$ denotes the horizontal Laplacian from~\eqref{eq:hor-Lap}. 
	\end{theorem}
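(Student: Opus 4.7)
The plan is to reduce the convergence of $L^{\varepsilon}_{\Ret}$ to the convergence of the analogous operator $L^{\varepsilon}_{\Exp}$ built from the horizontal exponential map, and then to invoke the result of~\cite{BOSCAIN2017} for the exponential-map walk. At the level of generators, the latter states that for every $f\in C^{\infty}(M;\mathbb{R})$,
\[
L^{\varepsilon}_{\Exp} f \longrightarrow \Delta^{\VV} f \quad \text{uniformly on } M \text{ as } \varepsilon \to 0.
\]
It therefore suffices to establish that $L^{\varepsilon}_{\Ret} f - L^{\varepsilon}_{\Exp} f \to 0$ uniformly on $M$.

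For the comparison, fix $x\in M$ and a horizontal unit vector $u\in\mathbb{S}_{x}$, and consider the two curves $\rho(t):=\Ret_{x}(tu)$ and $\eta(t):=\Exp_{x}(tu)$. By the retraction axioms and the definition of $\Exp$, both curves satisfy $\rho(0)=\eta(0)=x$ and $\dot\rho(0)=\dot\eta(0)=u$; since $\Ret$ is a \emph{second-order} horizontal retraction, Definition~\ref{def:second-order-agreement} further gives $\ddot\rho(0)=\ddot\eta(0)$ in any chart (equivalently, $\nabla_{\dot\rho}\dot\rho\rvert_{0}=\nabla_{\dot\eta}\dot\eta\rvert_{0}$ for any affine connection; cf.\ Remark~\ref{rem:second-order-agreement-connection}). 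Setting $g(t):=f(\rho(t))-f(\eta(t))$, a direct coordinate computation yields $g(0)=g'(0)=g''(0)=0$, and hence $g(t)=O(t^{3})$ by Taylor's theorem.

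The crucial point is that this $O(t^{3})$ bound is \emph{uniform} in $(x,u)$. By compactness of $M$ and of the horizontal unit sphere bundle $\bigsqcup_{x\in M}\mathbb{S}_{x}\subset\HH$, together with smoothness of $\Ret$, $\Exp$, and $f$, there exist $t_{0},C>0$ such that
\[
\bigl| f(\Ret_{x}(tu)) - f(\Exp_{x}(tu)) \bigr| \le C\,t^{3} \qquad\text{for all }t\in[0,t_{0}],\ x\in M,\ u\in\mathbb{S}_{x}.
\]
Substituting $t=\sqrt{k}\,\varepsilon$, averaging over $\mathbb{S}_{x}$, and dividing by $\varepsilon^{2}$ yields $\bigl|L^{\varepsilon}_{\Ret} f(x) - L^{\varepsilon}_{\Exp} f(x)\bigr|\le C\,k^{3/2}\,\varepsilon$, uniformly in $x\in M$. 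Combined with the convergence of $L^{\varepsilon}_{\Exp} f$, this establishes~\eqref{eq:conv-generators-ret}.

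The main obstacle is the uniform control of the $O(t^{3})$ remainder. This rests on the smooth dependence of solutions of the Hamiltonian system~\eqref{eq:normal-geod-Hamiltonian-1} on initial data, ensuring that $\Exp$ is well-defined and smooth on a small-time tube around the zero section of $\HH$; a standard compactness argument then bounds the third derivatives of $f\circ\rho$ and $f\circ\eta$ uniformly. A self-contained alternative to invoking~\cite{BOSCAIN2017} would be to Taylor-expand $f\circ\Ret_{x}(\sqrt{k}\,\varepsilon u)$ to second order and integrate over $\mathbb{S}_{x}$, using $\frac{1}{\omega_{k}}\int_{\mathbb{S}_{x}}u^{i}u^{j}\,du=\tfrac{1}{k}g^{ij}$ together with the explicit formula~\eqref{eq:normal-geod-Hamiltonian-1} for the second derivative of normal geodesics; identifying the resulting coordinate expression with $\Delta^{\VV}f$ then amounts to an algebraic unwinding of Definition~\ref{def:hor-div-and-sub-Lap-hor}.
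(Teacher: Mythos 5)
Your proposal is correct and follows essentially the same route as the paper's proof: invoke the result of~\cite{BOSCAIN2017} for the walk along normal geodesics (via the horizontal exponential map) and then show that the second-order agreement of $\Ret_{x}(tu)$ with $\Exp_{x}(tu)$ makes the difference of the generators an $O(\varepsilon)$ term after the parabolic rescaling. You merely spell out, with the uniform $O(t^{3})$ remainder bound from compactness, the Taylor-expansion step that the paper calls ``straightforward.''
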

	
	\begin{proof}
		The proof largely hinges on the corresponding result from~\cite{BOSCAIN2017} for \emph{geodesic} random walks.  Indeed, our definition of the sub-Laplacian defined by~\eqref{eq:hor-Lap} is identical to the \emph{microscopic Laplacian} with complement $\VV$ considered in Proposition 55 in~\cite{BOSCAIN2017}. 
		
		With these preliminaries at hand, Theorem 51 in~\cite{BOSCAIN2017} assures the validity of the statement of our theorem when $\Ret$ is replaced by $\Exp$, where the horizontal exponential map  $\Exp_{x}(tu)$ is defined by following the (unique) normal geodesic with horizontal initial conditions $(x,u)$ for time $t$. Then the statement of Theorem~\ref{thm:convergence-generators-ret2} immediately follows from a straightforward Taylor expansion of $\Ret_{x}(tu)$ and $\Exp_{x}(tu)$, while noticing that, by definition of second-order retractions,  these two maps agree up to second order at $t=0$ in the sense of Definition~\ref{def:second-order-agreement}. (See \cite{schwarz2023efficient} for a similar argument in the Riemannian case.)
	\end{proof}
	
	Notice that we did not have to assume the sub-Riemannian structure to be bracket-generating (i.e., to satisfy H\"ormander's condition) in the previous theorem. However, for convergence of the stochastic processes
	\begin{equation}\label{eq:def-stoch-proc}
	X^\varepsilon_t := x^\varepsilon_{\lfloor t/\varepsilon^2 \rfloor}\ , \quad t>0
	\end{equation}
resulting from Definition~\ref{def:ret-rand-walk} (with the usual parabolic scaling in time) we indeed require H\"ormander's condition. H\"ormander's condition implies that the topologies induced by the Carnot--Carath\'eodory metric and the smooth structure on $M$ coincide, see Theorem~2.3 in \cite{montgomery2002tour}. Under this condition we obtain the following result as a direct consequence of the convergence in Equation~\eqref{eq:conv-generators-ret}, see Theorem 69 in \cite{BOSCAIN2017}: 
	
	\begin{theorem}\label{thm:convergence-processes-ret2}
Let $M$ be compact, and let $(\HH,h)$ be a \emph{bracket-generating} sub-Riemannian structure  on $M$ with complement $\VV$. Consider a second-order horizontal retraction $\Ret: \HH \to M$ and the resulting stochastic process $(X_t^\varepsilon)_{t>0}$ defined in Equation \eqref{eq:def-stoch-proc}. Then $(X_t^\varepsilon)_{t>0}$ converges in distribution to the horizontal Brownian motion with generator $\Delta^{\VV}$. 
	\end{theorem}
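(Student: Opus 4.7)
The plan is to reduce Theorem~\ref{thm:convergence-processes-ret2} to the generator convergence of Theorem~\ref{thm:convergence-generators-ret2} by invoking the standard machinery for weak convergence of Markov processes, as packaged in Theorem~69 of~\cite{BOSCAIN2017}. Concretely, the discrete-time Markov chain on $M$ obtained by iterating steps~\ref{Ret-sample} and~\ref{Ret-walk} has one-step transition operator $U^\varepsilon_{\Ret}$, and $(X^\varepsilon_t)_{t>0}$ defined by~\eqref{eq:def-stoch-proc} is its piecewise-constant parabolic time-rescaling, with rescaled generator $L^\varepsilon_{\Ret}=(U^\varepsilon_{\Ret}-I)/\varepsilon^2$.

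First I would verify the hypotheses of the black box in~\cite{BOSCAIN2017}: (i) compactness of $M$ is assumed; (ii) bracket-generating is assumed; (iii) the uniform generator convergence $L^\varepsilon_{\Ret}f\to\Delta^{\VV}f$ for $f\in C^\infty(M;\mathbb{R})$ is exactly Theorem~\ref{thm:convergence-generators-ret2}. The bracket-generating assumption guarantees via H\"ormander's theorem that $\Delta^{\VV}$ is hypoelliptic, and together with compactness of $M$ this ensures that $\Delta^{\VV}$ generates a Feller semigroup on $C(M)$ whose associated diffusion is precisely the horizontal Brownian motion at hand, as already recorded at the end of Section~\ref{sec:basics-sub-Riemannian}. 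Moreover, bracket-generating ensures that the Carnot--Carath\'eodory topology agrees with the smooth topology on $M$, by Theorem~2.3 of~\cite{montgomery2002tour}, so that all relevant function spaces are unambiguous. The remaining step is a Trotter--Kurtz-type semigroup approximation (equivalently, a Stroock--Varadhan martingale problem argument), whose hypotheses -- convergence of discrete-time generators on a smooth core, well-posedness of the martingale problem for the limit, and tightness -- are all in place. Tightness of $(X^\varepsilon_t)_{t>0}$ on the Skorokhod path space follows from compactness of $M$ together with uniform boundedness of the single step $\varepsilon\sqrt{k}\,u$.

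The main obstacle, and the reason bracket-generating is indispensable here while it was not needed in Theorem~\ref{thm:convergence-generators-ret2}, is to identify $\Delta^{\VV}$ as the generator of a bona fide Markov process on $M$ with $C^\infty(M)$ as a core. Without bracket-generating, $\Delta^{\VV}$ may degenerate along the leaves of an integrable subbundle, in which case generator convergence no longer determines a unique well-posed limit process on all of $M$, and the walks could either fail to converge or yield a topologically ambiguous limit. Once the limit Feller semigroup is secured, the remaining verifications (martingale problem and tightness) are routine and have already been carried out in the analogous setting of~\cite{BOSCAIN2017}, so no genuinely new probabilistic calculation is required beyond the Taylor expansion already encapsulated in Theorem~\ref{thm:convergence-generators-ret2}.
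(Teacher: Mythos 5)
Your proposal is correct and follows essentially the same route as the paper, which likewise obtains the result as a direct consequence of the generator convergence in Theorem~\ref{thm:convergence-generators-ret2} combined with Theorem~69 of~\cite{BOSCAIN2017} under the compactness and bracket-generating hypotheses. Your additional remarks on tightness, the martingale problem, and the role of hypoellipticity are a sound elaboration of what that citation encapsulates, but they do not constitute a different argument.
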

	
	\section{Normal, Compatible, and Partial Connections}\label{sec:normal-compatible}
	
	In the Riemannian setting,  the Levi-Civita connection is the unique torsion-free affine connection that is compatible with the Riemannian metric. In the sub-Riemannian setting there exists no analogously canonical construction of an affine connection in general. In particular, unless $\HH$ is integrable, there exists no torsion-free affine connection on $M$ whose parallel transport preserves $\HH$. Nonetheless, there exist (at least) three natural notions of affine connections in the sub-Riemannian setting that each resemble important properties of the Riemannian case: normal, compatible, and partial connections. While none of these connections depend on the choice of a complement $\VV$ per se, such a choice yields a uniqueness result for partial connections and second-order retractions for normal and compatible connections, respectively. All of these connections have been studied in the literature before; however, a comprehensive overview highlighting their (dis)similarities appears to be missing. We therefore begin by reviewing normal connections.
	
	Let $\nabla: \Gamma(TM) \times \Gamma(TM) \to \Gamma(TM)$ 
	be an arbitrary affine connection on $M$. As usual, one defines 
	$ (\nabla_{X}\alpha)(Y) := X(\alpha(Y)) - \alpha(\nabla_{X}Y)$ for all $X ,Y\in \Gamma(TM)$ and all $\alpha \in \Gamma(T^{*}M)$.
	Given a sub-Riemannian structure $(\HH, h)$, define the \emph{``co-connection''} by
	$$
	\nabla_{\alpha} \beta := \nabla_{\alpha^{\sharp}} \beta \quad \text{for $\alpha, \beta \in \Gamma(T^{*}M)$} \ ,
	$$
	and denote its symmetric part by
	$$
	S^{\nabla}(\alpha, \beta) := \frac12 \left(\nabla_{\alpha}\beta + \nabla_{\beta}\alpha\right)  \ .  
	$$
	Call a curve $\alpha:[a,b]\to T^{*}M$ \emph{autoparallel}  if $\nabla_{\alpha(t)} \alpha(t) = 0$ for all $t$. It is straightforward to verify that two affine connections $\nabla$ and $\tilde\nabla$ on $M$ give rise to the same autoparallel curves if and only if their co-connections have the same symmetric part, i.e., if and only if $S^{\nabla} = S^{\tilde\nabla}$. We are interested in those connections $\nabla$ for which autoparallel curves are exactly the integral curves of the Hamiltonian vector field $X_{H}$ considered on Section~\ref{sec:normal-geodesics}, i.e., for which
	\begin{align}\label{eq:def-norm-conn}
	\nabla_{\alpha(t)} \alpha(t) = 0 \quad \iff \quad \dot \alpha(t) = X_{H}|_{\alpha(t)} \ . 
	\end{align}
	Following~\cite{LANGEROCK2003203}, we make the following definition:
	\begin{definition}[Normal connection]\label{def:def-norm-conn}
		An affine connection on $M$ is called $\HH$-\emph{normal} if autoparallel curves are the integral curves of the Hamiltonian vector field $X_{H}$, i.e., if~\eqref{eq:def-norm-conn} is satisfied.
	\end{definition}
\begin{remark}\label{rem:existence-normal-conn}
For a given sub-Riemannian structure, normal connections always exist, see, e.g., Proposition 18 in~\cite{LANGEROCK2003203}, but they are not unique in general.
\end{remark}

	One can verify that a connection $\nabla$ is normal if and only if the symmetric part satisfies
	$$
	S^{\nabla}(\alpha, \beta) = \frac 12 \left(\mathcal{L}_{\alpha^{\sharp}}\beta + \mathcal{L}_{\beta^{\sharp}}\alpha - d (g(\alpha, \beta)) \right)\ ,
	$$
	where $\mathcal{L}$ denotes the (usual) Lie derivative on smooth manifolds. For a proof, see, e.g., Theorem 17 in~\cite{LANGEROCK2003203}. 
	In local coordinates the symmetric part $S^{\nabla}(\alpha, \beta)$ of a normal connection is uniquely determined by the (dual) Christoffel symbols  $\Gamma^{ij}_{k}$ defined in~\eqref{eq:normal-geod-Hamiltonian-2}. 
	
\begin{remark}\label{rem:normal-peculiar}
As a word of caution we remark that the nomenclature ``normal connection'' might be somewhat misleading, since  geodesics arising from normal connections (i.e., curves that satisfy $ \nabla_{\dot \gamma(t)}\dot\gamma(t) = 0$) are in general \emph{not} normal geodesics. Indeed, let $\alpha(t)$ be autoparallel with respect to a normal connection $\nabla$, and let $\gamma(t)$ be the corresponding normal geodesics, i.e., $\dot\gamma(t)=\alpha^{\sharp}(t).$ Then
\begin{align*}
\nabla_{\dot \gamma(t)}\alpha(t) = 0 \ \text{(by construction)} \quad \text{but} \quad   \nabla_{\dot \gamma(t)}\dot\gamma(t) \neq 0 \ \text{(in general)}\ .
\end{align*}
Below we will show, however, that under certain conditions geodesics arising from normal connections (i.e., curves that satisfy $ \nabla_{\dot \gamma(t)}\dot\gamma(t) = 0$) yield second-order horizontal retractions. 
\end{remark}
Another class of connections that one naturally considers for sub-Riemannian structures are \emph{compatible} connections:
	
	\begin{definition}[Compatible connection]\label{def:def-comp-conn}
		An affine connection $\nabla$ on $M$ is called $\HH$-\emph{compatible} if (i) parallel transport with respect to $\nabla$ preserves $\HH$, which is equivalent to requiring that 
		$$
		\nabla_{X}Y \in \Gamma(\HH)\quad \forall X\in \Gamma(TM), \,  \forall Y \in \Gamma(\HH) \ , \quad \text{and}
		$$ 
		(ii) parallel transport preserves the metric $h$ on $\HH$, which is equivalent to requiring that 
		$$
		X(h(Y,Z)) = h(\nabla_{X}Y, Z) + h(Y, \nabla_{X}Z) \quad \forall X\in \Gamma(TM), \,  \forall Y,Z\in \Gamma(\HH) \ .
		$$
	\end{definition}
	
	\begin{remark}\label{rem:comp-conn-prop}
		A connection $\nabla$ is compatible with $(\HH, h)$ if and only if  $\nabla g \equiv 0$, i.e., the cometric $g$ is parallel, which in turn is equivalent to requiring that $(\nabla_{\alpha} \beta)^{\sharp}=  \nabla_{\alpha^{\sharp}} \beta^{\sharp}$ for all $\alpha, \beta \in \Gamma(T^{*}M)$, see, e.g.,~\cite{CHENG2021112387,Molina-Grong-2017,Grong2020AffineCA,Grong-Thalmaier2019}. 
	\end{remark}
Let the torsion of an affine connection be (as usual) defined by 
$$T(X,Y) =  \nabla_{X}Y  -  \nabla_{Y}X - [X,Y] \ .$$	
In order to relate normal connections to compatible connections, we require the following definition:
	
	\begin{definition}[Adjoint connection]\label{def:adjoint-connection}
		For an arbitrary affine connection $\nabla$ on $M$, define its \emph{adjoint} connection by 
		$$
		\hat \nabla_{X}Y:= \nabla_{X}Y - T(X,Y) \ ,
		$$
		where $T$ is the torsion of $\nabla$. 
	\end{definition}
	The torsion $\hat T$ of the adjoint connection $\hat \nabla$ satisfies $\hat T(X,Y) = - T(X,Y)$; therefore, the double adjoint satisfies $\hat{\hat{\nabla}}=\nabla$. 

\begin{remark}\label{rem:same-geodesics}
Notice that an affine connection $\nabla$ has the same geodesics as its adjoint connection $\hat \nabla$ since $\nabla_{\dot \gamma}\dot \gamma = 0$ if and only if $\hat \nabla_{\dot \gamma}\dot \gamma = 0$.
\end{remark}
	
	With these notions one obtains the following equivalence result between normal and compatible connections. 
	
	\begin{proposition}\label{prop:equiv-normal-compatible}
		Let $(\HH, h)$ be a sub-Riemannian structure on $M$. Then an affine connection on $M$ is $\HH$-normal if and only if its adjoint connection is $\HH$-compatible, and vice-versa.
	\end{proposition}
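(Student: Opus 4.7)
The plan is to reduce both the normality of $\nabla$ and the compatibility of $\hat\nabla$ to the same pointwise identity involving $\nabla g$ and the torsion $T$ of $\nabla$; the equivalence in the proposition is then immediate, and the ``vice-versa'' clause follows from $\hat{\hat\nabla}=\nabla$.

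First I would unpack the characterization of normality recalled just above the proposition, namely that $\nabla$ is $\HH$-normal iff
$S^{\nabla}(\alpha,\beta)=\tfrac12\bigl(\mathcal{L}_{\alpha^\sharp}\beta+\mathcal{L}_{\beta^\sharp}\alpha-d(g(\alpha,\beta))\bigr)$. Using the elementary identity $(\nabla_X\alpha)(Y)-(\mathcal{L}_X\alpha)(Y)=-\alpha(\nabla_Y X)-\alpha(T(X,Y))$, which follows from expanding $[X,Y]=\nabla_X Y-\nabla_Y X-T(X,Y)$ inside the Lie-derivative formula for a $1$-form, together with the two Leibniz identities
$$Y(g(\alpha,\beta))=(\nabla_Y\beta)(\alpha^\sharp)+\beta(\nabla_Y\alpha^\sharp)=(\nabla_Y\alpha)(\beta^\sharp)+\alpha(\nabla_Y\beta^\sharp)$$
(which use $g(\alpha,\beta)=\beta(\alpha^\sharp)=\alpha(\beta^\sharp)$), the normality equation becomes, after averaging and cancellation,
$$(\nabla_Y g)(\alpha,\beta)+\alpha(T(\beta^\sharp,Y))+\beta(T(\alpha^\sharp,Y))=0 \qquad \forall\,Y\in\Gamma(TM),\ \alpha,\beta\in\Gamma(T^{*}M).$$

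Next I would compute $\hat\nabla g$ directly in terms of $\nabla g$ and $T$. From $\hat\nabla_X Z=\nabla_X Z-T(X,Z)$ the induced action on $1$-forms satisfies $(\hat\nabla_Y\alpha)(Z)=(\nabla_Y\alpha)(Z)+\alpha(T(Y,Z))$. Substituting into $(\hat\nabla_Y g)(\alpha,\beta)=Y(g(\alpha,\beta))-g(\hat\nabla_Y\alpha,\beta)-g(\alpha,\hat\nabla_Y\beta)$ via $g(\gamma,\delta)=\gamma(\delta^\sharp)$ and using the antisymmetry of $T$ yields
$$(\hat\nabla_Y g)(\alpha,\beta)=(\nabla_Y g)(\alpha,\beta)+\alpha(T(\beta^\sharp,Y))+\beta(T(\alpha^\sharp,Y)),$$
which is precisely the left-hand side of the identity derived in the previous step. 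By Remark~\ref{rem:comp-conn-prop}, $\hat\nabla$ is $\HH$-compatible iff $\hat\nabla g\equiv 0$, so both the normality of $\nabla$ and the compatibility of $\hat\nabla$ are equivalent to the same identity, and hence to each other.

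The ``and vice-versa'' clause is then free: since $\hat{\hat\nabla}=\nabla$, applying the equivalence just proved with $\hat\nabla$ in place of $\nabla$ gives that $\hat\nabla$ is $\HH$-normal iff $\nabla$ is $\HH$-compatible. The only genuine obstacle is clerical bookkeeping: keeping the signs straight in the Lie-derivative/torsion exchange and in the passage between actions on vectors and on $1$-forms. Once the normality condition has been put into the torsion-corrected form above, the comparison with $(\hat\nabla_Y g)(\alpha,\beta)$ is literally a one-line observation.
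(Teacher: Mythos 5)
Your proposal is correct and follows essentially the same route as the paper: both proofs start from the Langerock characterization of normality (Theorem 17 of that reference, quoted just before the proposition) and reduce the claim to the identity $\hat\nabla g\equiv 0$, which the paper writes out as $\alpha(\hat\nabla_X\beta^{\sharp})+\beta(\hat\nabla_X\alpha^{\sharp})=X(g(\alpha,\beta))$. The only cosmetic differences are that you delegate the final equivalence ``$\hat\nabla g\equiv 0$ iff $\hat\nabla$ is $\HH$-compatible'' to Remark~\ref{rem:comp-conn-prop} where the paper re-derives it inline via the annihilator $\HH^{0}$, and your use of $\hat{\hat\nabla}=\nabla$ for the converse is a clean substitute for the paper's ``proven similarly.''
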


\begin{proof}
		Although this result is known (see, e.g., Proposition 3.1 in~\cite{Grong2020AffineCA}), we provide a proof for convenience. Let $\nabla$ be an $\HH$-normal connection with adjoint connection $\hat \nabla$. We need to show that $\hat \nabla$ is $\HH$-compatible. 
		Theorem 17 in~\cite{LANGEROCK2003203} implies that normality of $\nabla$ is equivalent to 
		\begin{align*}
		\alpha(\nabla_{\beta^{\sharp}}X) + \beta(\nabla_{\alpha^{\sharp}}X) = \alpha([\beta^{\sharp},X])+ \beta([\alpha^{\sharp},X]) + X(g(\alpha, \beta)) \quad\forall X\in \Gamma(TM), \, \forall \alpha, \beta \in \Gamma(T^{*}M) \ .
		\end{align*}
		This, in turn, is equivalent to
		\begin{align}\label{eq:normal-conn-equiv}
		\alpha(\hat\nabla_{X}\beta^{\sharp}) + \beta(\hat\nabla_{X}\alpha^{\sharp}) = X(g(\alpha, \beta)) \quad\forall X\in \Gamma(TM), \, \forall \alpha, \beta \in \Gamma(T^{*}M) \ .
		\end{align}
		Now let $\beta \in \HH^{0}$, where the \emph{annihilator} $\HH^{0}$ of $\HH$ is defined by 
		$$
		\HH^{0} := \{ \eta \in T^{*}M \, : \, \text{$\eta(Y) = 0$ for all $Y \in \HH$}\} \ .
		$$
		Then $\beta^{\sharp}=0 = g(\alpha, \beta)$,  and hence~\eqref{eq:normal-conn-equiv} gives $\beta(\hat\nabla_{X}\alpha^{\sharp})=0$. Since $\beta \in \HH^{0}$ was arbitrary, one obtains that $\hat\nabla_{X}\alpha^{\sharp}\in \HH$, and since $\alpha$ was arbitrary, one additionally obtains that $\hat\nabla_{X}Y \in \HH$ for all $Y\in \HH$. Hence, parallel transport with respect to $\hat \nabla$ preserves $\HH$, and~\eqref{eq:normal-conn-equiv}  implies that $\hat \nabla$ is indeed $\HH$-compatible. 
		The converse direction can be proven similarly. 
	\end{proof}

\begin{remark}
It follows from Proposition~\ref{prop:equiv-normal-compatible} together with Remark~\ref{rem:existence-normal-conn} that compatible connections always exist. It follows furthermore that if there exists an affine connection that is both compatible \emph{and} normal, then the torsion of such a connection vanishes, which in turn implies that  $[X,Y]$ is horizontal for all $X, Y \in \Gamma(\HH)$. Therefore, such a connection can only exist if $\HH$ is integrable.
\end{remark}

		Neither normal nor compatible connections are unique in general. In order to obtain (at least a partial notion of) uniqueness, it has been observed in~\cite{CHENG2021112387} that it is useful to restrict to \emph{partial connections}. 
	
	\begin{definition}[Partial connection]\label{def:part-comp-conn}
		Let $(\HH, h)$ be a sub-Riemannian structure on $M$. A \emph{partial connections} is a bilinear mapping 
		of the form $\nabla: \Gamma(\HH) \times \Gamma(\HH) \to \Gamma(\HH)$ that additionally satisfies
		$$
		\nabla_{fX}Y = f \nabla_{X}Y \quad \text{and} \quad \nabla_{X}fY = f \nabla_{X}Y + (Xf) Y  \quad \forall \, f \in C^{\infty}(M) \ .
		$$
		A partial connection is called \emph{partially compatible} if one additionally has that
		$$
		X(h(Y,Z)) = h(\nabla_{X}Y, Z) + h(Y, \nabla_{X}Z) \quad   \forall \,X,Y,Z\in \Gamma(\HH) \ .
		$$
	\end{definition}
	 
One has the following existence and uniqueness result,  see~\cite{CHENG2021112387}:
	
	\begin{proposition}\label{prop:part-comp-conn}
		Let $(\HH, h)$ be a sub-Riemannian structure on $M$ with complement $\VV$. Then there exists a unique partial connection $\nabla^{\VV}$ that is partially compatible and whose torsion satisfies $T(\HH, \HH) \subset \VV$. 
	\end{proposition}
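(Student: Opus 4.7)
The approach is to emulate the classical Levi-Civita construction, replacing ordinary Lie brackets $[\cdot,\cdot]$ with their horizontal projections $\mathrm{pr}_{\HH}[\cdot,\cdot]$, since a partial connection sees only horizontal inputs. My first step is to reformulate the torsion condition: because $\nabla_X Y$ and $\nabla_Y X$ are both horizontal by definition of a partial connection, while $[X,Y] = \mathrm{pr}_{\HH}[X,Y] + \mathrm{pr}_{\VV}[X,Y]$, the requirement $T(X,Y) \in \Gamma(\VV)$ for all $X,Y \in \Gamma(\HH)$ is equivalent to the purely horizontal identity
$$ \nabla_X Y - \nabla_Y X \;=\; \mathrm{pr}_{\HH}[X,Y], \qquad X,Y \in \Gamma(\HH), \qquad (\star) $$
and the vertical component of $T(X,Y)$ is then automatically $-\mathrm{pr}_{\VV}[X,Y]$, which gives no further constraint.

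For uniqueness I would derive a Koszul-type formula. Writing the partial compatibility identity in its three cyclic permutations in $(X,Y,Z)$ and taking the combination (first) $+$ (second) $-$ (third), then applying $(\star)$ three times to symmetrize the resulting $\nabla$-terms, yields
$$ 2\,h(\nabla_X Y, Z) \;=\; X\,h(Y,Z) + Y\,h(Z,X) - Z\,h(X,Y) + h(\mathrm{pr}_{\HH}[X,Y], Z) - h(\mathrm{pr}_{\HH}[X,Z], Y) - h(\mathrm{pr}_{\HH}[Y,Z], X) $$
for all $X,Y,Z \in \Gamma(\HH)$. Since $h$ is non-degenerate on $\HH$, this prescribes $\nabla_X Y \in \Gamma(\HH)$ uniquely.

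For existence, I would reverse the derivation and \emph{define} $\nabla_X Y$ by the above Koszul formula: viewed as a function of $Z$ with $X,Y$ fixed, the right-hand side is $C^\infty(M)$-linear in $Z$, so it picks out a unique element of $\Gamma(\HH)$ via nondegeneracy of $h$ on $\HH$. The $C^\infty$-linearity in $Z$ uses $[W,fZ] = f[W,Z] + (Wf)Z$ for $W \in \{X,Y\}$ together with $\mathrm{pr}_{\HH}Z = Z$ (since $Z \in \Gamma(\HH)$); the emerging $(Xf)$- and $(Yf)$-terms cancel exactly against those produced by Leibniz applied to $Xh(Y,fZ)$ and $Yh(fZ,X)$. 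Partial compatibility is then recovered by summing the Koszul formulas for $h(\nabla_X Y, Z)$ and $h(\nabla_X Z, Y)$: the bracket contributions cancel thanks to the symmetry of $h$ and the antisymmetry $[Y,Z] = -[Z,Y]$, and the remaining derivative terms collapse to $2Xh(Y,Z)$. The torsion identity $(\star)$ follows by subtracting the Koszul formula for $\nabla_Y X$ from that for $\nabla_X Y$, whereupon every term except $2h(\mathrm{pr}_{\HH}[X,Y], Z)$ cancels.

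I expect the main obstacle to be verifying that the Koszul prescription really satisfies the defining axioms of a partial connection (Definition~\ref{def:part-comp-conn}), in particular the tensoriality $\nabla_{fX}Y = f \nabla_X Y$: each $(Wf)$-contribution coming from $[fX,W] = f[X,W] - (Wf)X$ (with $W \in \{Y,Z\}$) must cancel precisely against the $f$-Leibniz terms produced by the first three Koszul summands, using again $\mathrm{pr}_{\HH}X = X$. This is routine bookkeeping, but sign conventions are delicate, exactly as in the standard Levi-Civita proof; the Leibniz rule $\nabla_X(fY) = f\nabla_X Y + (Xf)Y$ is handled analogously. No further sub-Riemannian subtlety enters, since every bracket appearing in the Koszul formula has already been projected onto $\HH$ by construction.
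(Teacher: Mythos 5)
Your proposal is correct: the horizontally projected Koszul formula you derive does determine $\nabla_X Y$ uniquely via nondegeneracy of $h$ on $\HH$, and the tensoriality/Leibniz/compatibility/torsion verifications all go through exactly as you sketch (the key point being that $\mathrm{pr}_{\HH}Z=Z$ for horizontal $Z$, so the extra $(Wf)$-terms from the projected brackets cancel against the Leibniz terms just as in the Riemannian case). Note that the paper itself gives no proof of this proposition -- it simply cites Cheng et al.\ -- but your Levi--Civita-style argument is the standard route to this result and is essentially what that reference does.
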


A minor adaptation of the proof of Proposition~\ref{prop:part-comp-conn} found in~\cite{CHENG2021112387} shows that the unique partially compatible connection $\nabla^{\VV}$ can be extended to a compatible connection:
  
\begin{proposition}\label{prop:existence-comp-extension}
Let $(\HH,h)$ be a sub-Riemannian structure with complement $\VV$. Then there exists an $\HH$-compatible connection $\nabla$ whose torsion satisfies $T(\HH, \HH) \subset \VV$.
\end{proposition}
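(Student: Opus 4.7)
The plan is to build the required $\nabla$ in two stages: first extend the partial connection $\nabla^{\VV}$ to a full bundle connection on $(\HH,h)$ that is $h$-metric in every direction, then extend further to a full affine connection on all of $TM$ by making a free choice on the $\VV$-factor.

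For the first stage, I would invoke the classical fact that every smooth vector bundle equipped with a fiber metric admits a metric bundle connection (start from any bundle connection -- which exists by a partition of unity -- and antisymmetrize its connection one-form in a local $h$-orthonormal frame of $\HH$). Let $\bar\nabla\colon\Gamma(TM)\times\Gamma(\HH)\to\Gamma(\HH)$ be such a metric connection and, using the splitting $TM=\HH\oplus\VV$, define
$$
\tilde\nabla_X Y \;:=\; \nabla^{\VV}_{\mathrm{pr}_{\HH}X} Y \,+\, \bar\nabla_{\mathrm{pr}_{\VV}X} Y, \qquad X\in\Gamma(TM),\ Y\in\Gamma(\HH).
$$
$C^{\infty}(M)$-linearity in $X$ and the Leibniz rule in $Y$ are inherited additively from the two summands, so $\tilde\nabla$ is a bundle connection on $\HH$. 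Metric compatibility follows by writing $X\,h(Y,Z)=(\mathrm{pr}_{\HH}X)\,h(Y,Z)+(\mathrm{pr}_{\VV}X)\,h(Y,Z)$ and applying partial compatibility of $\nabla^{\VV}$ (Proposition~\ref{prop:part-comp-conn}) to the first piece and metric compatibility of $\bar\nabla$ to the second.

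For the second stage, choose any bundle connection $D$ on $\VV$ and set
$$
\nabla_X Y \;:=\; \tilde\nabla_X Y_{\HH} \,+\, D_X Y_{\VV}, \qquad X,Y\in\Gamma(TM),
$$
with $Y=Y_{\HH}+Y_{\VV}$ the $\HH\oplus\VV$ decomposition. A direct check shows $\nabla$ is an affine connection on $M$; by construction it preserves $\HH$, and on $\Gamma(TM)\times\Gamma(\HH)$ it coincides with $\tilde\nabla$, hence is $h$-compatible in the sense of Definition~\ref{def:def-comp-conn}. For the torsion condition, take $X,Y\in\Gamma(\HH)$: since $Y_{\VV}=0$, one has $\nabla_X Y=\nabla^{\VV}_X Y$ and analogously $\nabla_Y X=\nabla^{\VV}_Y X$, so $T(X,Y)=\nabla^{\VV}_X Y-\nabla^{\VV}_Y X-[X,Y]$ lies in $\VV$ by the torsion property of $\nabla^{\VV}$ from Proposition~\ref{prop:part-comp-conn}.

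The only substantive input is the existence of the metric bundle connection $\bar\nabla$ on $(\HH,h)$ used in Stage~1, which is a standard fact independent of any affine structure on $M$ and which I would cite rather than reprove. Everything else is bookkeeping: the definitions split cleanly along $\HH\oplus\VV$, no interaction term arises between the two summands, and the main obstacle amounts to nothing more than correctly gluing along the two projections $\mathrm{pr}_{\HH}$ and $\mathrm{pr}_{\VV}$.
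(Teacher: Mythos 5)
Your construction is correct, but it follows a genuinely different route from the paper. The paper's proof starts from an \emph{arbitrary} $\HH$-compatible reference connection $\nabla'$ and corrects it by an explicit Koszul-type operator $\kappa$ built from the horizontal part of the torsion of $\nabla'$; skew-symmetry of $\kappa(X)$ preserves compatibility, and a short computation shows the corrected torsion on $\HH\times\HH$ is purely vertical. You instead take the hard part as a black box -- the unique partially compatible partial connection $\nabla^{\VV}$ with $T(\HH,\HH)\subset\VV$ from Proposition~\ref{prop:part-comp-conn} -- and extend it in the remaining directions by soft gluing: a metric bundle connection on $(\HH,h)$ for vertical differentiation directions and an arbitrary bundle connection on $\VV$ for the $\VV$-valued part. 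Both arguments are sound; your checks of the Leibniz rule, metric compatibility (splitting $Xh(Y,Z)$ along $\mathrm{pr}_{\HH}X+\mathrm{pr}_{\VV}X$), and the torsion identity $T(X,Y)=\nabla^{\VV}_XY-\nabla^{\VV}_YX-[X,Y]\in\VV$ for horizontal $X,Y$ all go through, and the auxiliary existence of a metric bundle connection is indeed standard and citable. What each approach buys: the paper's correction argument is self-contained modulo the existence of \emph{some} compatible connection and exhibits an explicit torsion-redistribution formula, but one must appeal to the uniqueness in Proposition~\ref{prop:part-comp-conn} to see that the resulting connection restricts to $\nabla^{\VV}$ on $\Gamma(\HH)\times\Gamma(\HH)$; your construction is an extension of $\nabla^{\VV}$ by design, which is precisely the form in which the result is used in Corollary~\ref{cor:second-order-retractions-partial-conn}, at the cost of depending on the cited existence-and-uniqueness result rather than reproving its content.
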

\begin{proof}
Let $\nabla'$ be an arbitrary reference compatible connection. Denote its torsion by $T'$, and consider the decomposition $T' = T'_{\HH} + T'_{\VV} = \mathrm{pr}_{\HH}T + \mathrm{pr}_{\VV}T$. Define a linear operator 
$
\kappa: \Gamma(TM) \to \Gamma(\mathrm{End}(\HH))
$
by 
$$
h(\kappa(X) Y_{1}, Y_{2}) := \frac 12 \big( 
-h(T'_{\HH} (X,Y_{1}),Y_{2})
+ h(T'_{\HH} (Y_{1},Y_{2}),\mathrm{pr}_{\HH}X)
+h(T'_{\HH} (X,Y_{2}),Y_{1}) 
\big) \ ,
$$
where $X\in \Gamma(TM)$ is arbitrary and $Y_{1}, Y_{2}\in \Gamma(\HH)$.
Then clearly $\kappa$ is skew-symmetric, i.e., 
$
h(\kappa(X) Y_{1}, Y_{2}) + h(\kappa(X) Y_{2}, Y_{1}) = 0.
$
Define a linear operator 
$\tilde \kappa: \Gamma(TM) \to \Gamma(\mathrm{End}(TM))$ by extending $\kappa$ according to
$\tilde \kappa (X) Y = \kappa(X)Y$ if $Y \in \Gamma(\HH)$ and $\tilde \kappa (X) Y = 0$ if $Y \in \Gamma(\VV)$.
Then $$\nabla_{X}Y := \nabla'_{X}Y + \tilde\kappa({X})Y $$
defines an affine connection (since $\tilde\kappa({X})$ is a linear operator), which is indeed $\HH$-compatible (since $\nabla'$ is compatible and $\kappa({X})$ is skew-symmetric). 

It remains to show that the torsion of $\nabla$ satisfies $T(\HH, \HH) \subset \VV$. This follows from noticing that 
\begin{align*}\tag{i}
T(X,Y) = T'(X,Y) + \kappa(X)Y- \kappa(Y)X \quad \forall X,Y\in \Gamma(\HH)
\end{align*}
and (using the definition of $\kappa$) that
\begin{align*}\tag{ii}
h(\kappa(X)Y_{1}, Y_{2})- h(\kappa(Y_{1})X, Y_{2}) = -h(T'_{\HH} (X,Y_{1}),Y_{2}) \quad \forall X,Y_{1}, Y_{2}\in \Gamma(\HH) \ .
\end{align*}
Then (i) and (ii) yield that
\begin{align*}\tag{iii}
T(X,Y) = T'(X,Y) - T'_{\HH}(X,Y) = T'_{\VV}(X,Y) \in \VV \quad \forall X,Y\in \Gamma(\HH) \ ,
\end{align*}
which completes the proof.
\end{proof}

\begin{remark} \label{rem:compatible-or-normal}
The statement of Proposition~\ref{prop:existence-comp-extension} remains true if ``compatible connection'' is replaced by ``normal connection'', since the torsion of a normal connection is the negative of the torsion of its adjoint compatible connection. 
\end{remark}

Under certain circumstances, the result of Proposition~\ref{prop:existence-comp-extension} can be strengthened. To this end, consider the following definition:

	\begin{definition}[Metric-preserving complement]\label{def:metric-comp}
		Let $(\HH,h)$ be a sub-Riemannian structure with complement $\VV$. Then $\VV$ is called \emph{metric-preserving} if $\mathcal{L}^{\VV}_{Y} h= 0$ for all vertical fields $Y$, where $\mathcal{L}^{\VV}$ is defined as in Definition~\ref{def:hor-Lie}.  
	\end{definition}
	
	In other words, $\VV$ is metric-preserving if the flow along vertical vector fields preserves the horizontal metrics on $\HH$. With this notion at hand, Proposition~\ref{prop:existence-comp-extension} can be strengthened in the following sense; see~\cite{CHENG2021112387} for a proof:
	
\begin{proposition}\label{prop:existence-comp-extension-sharpened}
Let $(\HH,h)$ be a sub-Riemannian structure with metric-preserving complement $\VV$. Then there exists a compatible connection such that both $\HH$ and $\VV$ are preserved under parallel transport, and such that the torsion $T$ of $\nabla$ satisfies $T(\HH, \HH)\subset \VV$ and  $T(\HH, \VV)= 0$. 
\end{proposition}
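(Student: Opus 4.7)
The plan is to build $\nabla$ piecewise on the four blocks of the decomposition $TM=\HH\oplus\VV$, rather than by modifying the connection constructed in Proposition~\ref{prop:existence-comp-extension}. The key observation is that the torsion condition $T(\HH,\VV)=0$ together with the requirement that $\VV$ be preserved under parallel transport uniquely forces the mixed components of $\nabla$ to be given by projections of the Lie bracket. For such a choice, compatibility in a vertical direction then becomes equivalent to the metric-preserving assumption $\mathcal{L}^{\VV}_{Y}h=0$.

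\textbf{Construction.} Fix any auxiliary connection $D$ on the vector bundle $\VV$, and set
\[
\nabla_{X}Y := \begin{cases}
\nabla^{\VV}_{X}Y & \text{if } X,Y\in\Gamma(\HH),\\
\mathrm{pr}_{\VV}[X,Y] & \text{if } X\in\Gamma(\HH),\ Y\in\Gamma(\VV),\\
\mathrm{pr}_{\HH}[X,Y] & \text{if } X\in\Gamma(\VV),\ Y\in\Gamma(\HH),\\
D_{X}Y & \text{if } X,Y\in\Gamma(\VV),
\end{cases}
\]
where $\nabla^{\VV}$ is the unique partial compatible connection of Proposition~\ref{prop:part-comp-conn}. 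Extend to arbitrary $X,Y\in\Gamma(TM)$ by means of the $C^{\infty}(M)$-linear decomposition $X=\mathrm{pr}_{\HH}X+\mathrm{pr}_{\VV}X$ and similarly for $Y$. Using $[fX,Y]=f[X,Y]-(Yf)X$, a short check on each of the four blocks confirms that $\nabla$ is $C^\infty(M)$-linear in the first slot and satisfies the Leibniz rule in the second, so it is a bona fide affine connection. By construction $\nabla_{X}Y\in\Gamma(\HH)$ whenever $Y\in\Gamma(\HH)$ and $\nabla_{X}Y\in\Gamma(\VV)$ whenever $Y\in\Gamma(\VV)$, so both $\HH$ and $\VV$ are preserved under parallel transport.

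\textbf{Compatibility and torsion.} For compatibility I need $X(h(Y,Z))=h(\nabla_{X}Y,Z)+h(Y,\nabla_{X}Z)$ for all $X\in\Gamma(TM)$ and $Y,Z\in\Gamma(\HH)$. The case $X\in\Gamma(\HH)$ is exactly partial compatibility of $\nabla^{\VV}$ from Definition~\ref{def:part-comp-conn}. For $X\in\Gamma(\VV)$, the mixed rules rewrite the right-hand side as $h(\mathrm{pr}_{\HH}[X,Y],Z)+h(Y,\mathrm{pr}_{\HH}[X,Z])$, so that the desired identity reduces to $(\mathcal{L}^{\VV}_{X}h)(Y,Z)=0$, which is precisely Definition~\ref{def:metric-comp}. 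For the torsion: on $\Gamma(\HH)\times\Gamma(\HH)$, since $\nabla^{\VV}_{X}Y-\nabla^{\VV}_{Y}X=\mathrm{pr}_{\HH}[X,Y]$ by Proposition~\ref{prop:part-comp-conn}, I obtain $T(X,Y)=-\mathrm{pr}_{\VV}[X,Y]\in\Gamma(\VV)$. For $X\in\Gamma(\HH)$ and $Y\in\Gamma(\VV)$, direct expansion gives
\[
T(X,Y)=\mathrm{pr}_{\VV}[X,Y]-\mathrm{pr}_{\HH}[Y,X]-[X,Y]=\mathrm{pr}_{\VV}[X,Y]+\mathrm{pr}_{\HH}[X,Y]-[X,Y]=0.
\]
The main obstacle is conceptual rather than computational: one must recognize that the mixed rules are uniquely forced by $\VV$-preservation combined with $T(\HH,\VV)=0$, and that compatibility in a vertical direction then becomes tractable precisely because of the metric-preserving hypothesis on $\VV$; without the latter, no such connection can exist.
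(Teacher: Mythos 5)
Your construction is correct, and it is worth noting that the paper does not actually prove Proposition~\ref{prop:existence-comp-extension-sharpened} itself --- it defers to~\cite{CHENG2021112387} --- so your argument is a genuinely self-contained alternative. It also differs in style from the paper's proof of the weaker Proposition~\ref{prop:existence-comp-extension}, which starts from an arbitrary reference compatible connection and corrects it by a skew-symmetric contorsion-type tensor $\kappa$; you instead assemble the connection block by block on $TM=\HH\oplus\VV$, taking $\nabla^{\VV}$ on the $\HH\HH$-block, the projected brackets $\mathrm{pr}_{\VV}[X,Y]$ and $\mathrm{pr}_{\HH}[X,Y]$ on the mixed blocks, and an arbitrary auxiliary connection on the $\VV\VV$-block. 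All the individual steps check out: the bracket identities $[fX,Y]=f[X,Y]-(Yf)X$ do give $C^{\infty}$-linearity in the first slot and the Leibniz rule in the second on the mixed blocks precisely because the relevant projections kill the offending terms; the identity $\nabla^{\VV}_{X}Y-\nabla^{\VV}_{Y}X=\mathrm{pr}_{\HH}[X,Y]$ is exactly the condition $\mathrm{pr}_{\HH}T(\HH,\HH)=0$ from Proposition~\ref{prop:part-comp-conn}; and vertical compatibility reduces, via Definition~\ref{def:hor-Lie}, to $\mathcal{L}^{\VV}_{X}h=0$, which is Definition~\ref{def:metric-comp}. What your route buys beyond the citation is twofold: the observation that $\VV$-preservation together with $T(\HH,\VV)=0$ \emph{forces} the mixed blocks to be the projected Lie brackets, which immediately yields the converse (no such connection exists when $\VV$ fails to be metric-preserving), and an explicit description of the non-uniqueness (it resides entirely in the free choice of $D$ on the $\VV\VV$-block).
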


Returning to the main theme of our exposition, we now discuss how the choice of a complement $\VV$ allows for expressing the sub-Laplacian $\Delta^{\VV}$ using partial connections (see Section~\ref{sec:horizontal-Laplacians}) and how certain normal and compatible connections yield second-order retractions for normal geodesics (see Section~\ref{sec:geodesics-revisited}).
	
\subsection{Sub-Laplacians revisited}\label{sec:horizontal-Laplacians}
	The horizontal divergence and sub-Laplacian introduced in Definition~\ref{def:hor-div-and-sub-Lap-hor} can conveniently be expressed using partially compatible connections:
	\begin{lemma}\label{lem:hor-Lie-hor-div}
		Let $(\HH,h)$ be a sub-Riemannian structure with complement $\VV$, and let $\nabla^{\VV}$ denote the unique partially compatible connection from Proposition~\ref{prop:part-comp-conn}. Then for all $X\in \Gamma(\HH)$,
		\begin{align*}
		\mathrm{div}^{\VV}(X) = \mathrm{tr}_{\HH}(\nabla^{\VV}X) := \sum_{i=1}^{k} h(\nabla^{\VV}_{E_{i}}X, E_{i}) \ ,
		\end{align*}
		where $(E_{1}, \dots, E_{k})$ is a (local) orthonormal horizontal frame of $\HH$ and 
		$\mathrm{div}^{\VV}$ denotes the horizontal divergence defined by~\eqref{eq:hor-div}. In particular, one has that
		\begin{align*}
		\Delta^{\VV}f = \mathrm{tr}_{\HH}(\nabla^{\VV}(df)^{\sharp})  \ ,
		\end{align*}
		where $\Delta^{\VV}$ is the sub-Laplacian defined by~\eqref{eq:hor-Lap}.
	\end{lemma}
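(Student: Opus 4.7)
The plan is to reduce the identity for $\mathrm{div}^{\VV}$ to a frame calculation using the defining equation $\mathcal{L}_{X}^{\VV}\omega_{\HH} = \mathrm{div}^{\VV}(X)\,\omega_{\HH}$, and then recognize the resulting expression as $\mathrm{tr}_{\HH}(\nabla^{\VV}X)$ by invoking the two defining properties of $\nabla^{\VV}$ from Proposition~\ref{prop:part-comp-conn}: partial compatibility with $h$ and the torsion condition $T(\HH,\HH)\subset\VV$. The consequence for $\Delta^{\VV}f$ is then immediate by substituting $X=(df)^{\sharp}$.

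First, I would fix a local orthonormal horizontal frame $(E_{1},\dots,E_{k})$, apply both sides of~\eqref{eq:hor-div} to this frame, and use Definition~\ref{def:hor-Lie}. Since $\omega_{\HH}(E_{1},\dots,E_{k})$ is locally constant with value $\pm 1$, the first term of $\mathcal{L}_{X}^{\VV}\omega_{\HH}(E_{1},\dots,E_{k})$ vanishes, and each remaining term reduces, via the skew-symmetry of $\omega_{\HH}$ and the expansion $\mathrm{pr}_{\HH}[X,E_{i}] = \sum_{j} h(\mathrm{pr}_{\HH}[X,E_{i}],E_{j})\,E_{j}$, to $h(\mathrm{pr}_{\HH}[X,E_{i}],E_{i})\,\omega_{\HH}(E_{1},\dots,E_{k})$. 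Comparing with the right-hand side of~\eqref{eq:hor-div}, I would obtain the formula
\begin{equation*}
\mathrm{div}^{\VV}(X) = -\sum_{i=1}^{k} h\bigl(\mathrm{pr}_{\HH}[X,E_{i}],\,E_{i}\bigr).
\end{equation*}

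Next, I would independently evaluate $\mathrm{tr}_{\HH}(\nabla^{\VV}X) = \sum_{i} h(\nabla^{\VV}_{E_{i}}X,E_{i})$. The torsion condition $T(\HH,\HH)\subset\VV$ gives $\mathrm{pr}_{\HH}(\nabla^{\VV}_{E_{i}}X - \nabla^{\VV}_{X}E_{i}) = \mathrm{pr}_{\HH}[E_{i},X]$, and since both $\nabla^{\VV}_{E_{i}}X$ and $\nabla^{\VV}_{X}E_{i}$ are horizontal by definition of a partial connection, the projection can be dropped on the left:
\begin{equation*}
h(\nabla^{\VV}_{E_{i}}X,E_{i}) = h(\nabla^{\VV}_{X}E_{i},E_{i}) + h(\mathrm{pr}_{\HH}[E_{i},X],E_{i}).
\end{equation*}
Partial compatibility applied to $h(E_{i},E_{i})\equiv 1$ yields $h(\nabla^{\VV}_{X}E_{i},E_{i}) = \tfrac{1}{2}X(h(E_{i},E_{i})) = 0$, so the first summand vanishes. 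Summing over $i$ and using $\mathrm{pr}_{\HH}[E_{i},X] = -\mathrm{pr}_{\HH}[X,E_{i}]$ produces exactly the expression for $\mathrm{div}^{\VV}(X)$ derived in the previous step, establishing the first identity.

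For the second identity, I would simply set $X = (df)^{\sharp}\in\Gamma(\HH)$ in the formula just proved and invoke the definition $\Delta^{\VV}f := \mathrm{div}^{\VV}((df)^{\sharp})$ from~\eqref{eq:hor-Lap}. The main subtlety in the whole argument is keeping the distinction between the partial connection and a full affine connection straight: $\nabla^{\VV}$ is only defined for horizontal arguments in both slots, so one must verify that each intermediate expression (in particular $\nabla^{\VV}_{X}E_{i}$ with horizontal $X$, and the torsion identity applied to horizontal arguments) genuinely lies within the domain of definition, and that the partial-compatibility identity is only used with all three arguments horizontal. Apart from that, the computation is a short frame-level manipulation.
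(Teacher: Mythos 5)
Your proof is correct and follows essentially the same frame computation as the paper: both reduce $\mathrm{div}^{\VV}(X)$ to $-\sum_i h(\mathrm{pr}_{\HH}[X,E_i],E_i)$ and use the torsion condition $T(\HH,\HH)\subset\VV$ to identify this with $\mathrm{tr}_{\HH}(\nabla^{\VV}X)$. The only (harmless) difference is that the paper chooses a frame with $\nabla^{\VV}_{X}E_i=0$ at the point, whereas you keep an arbitrary orthonormal frame and kill the terms $h(\nabla^{\VV}_{X}E_i,E_i)$ via partial compatibility.
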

	\begin{proof}
		Fix $X\in \Gamma(\HH)$, and consider a  (local) orthonormal horizontal frame $(E_{1}, \dots, E_{k})$ in $\HH$ that (without loss of generality) locally satisfies $\nabla^{\VV}_{X}E_{i}=0$ and thus 
		$$
		\mathrm{pr}_{\HH} [X,E_{i}] = \nabla^{\VV}_{X}E_{i} - \nabla^{\VV}_{E_{i}}X = - \nabla^{\VV}_{E_{i}}X \ ,
		$$
		where for the first equality we have used that the torsion of $\nabla^{\VV}$ lies in $\VV$ by construction. Using the definition of $\omega_{\HH}$ from~\eqref{eq:hor-volume-form}, one obtains
		\begin{align*}
		\left(\mathcal{L}_{X}^{\VV}\omega_{\HH}\right)(E_{1}, \dots, E_{k})
		&= - \sum_{i=1}^{k} \omega_{\HH}(E_{1}, \dots, \mathrm{pr}_{\HH}[X, E_{i}], \dots, E_{k}) = \sum_{i=1}^{k} \omega_{\HH}(E_{1}, \dots, \nabla^{\VV}_{E_{i}}X, \dots, E_{k})\\
		&=\left(\sum_{i=1}^{k} h(\nabla^{\VV}_{E_{i}}X, E_{i})\right) \omega_{\HH}(E_{1}, \dots, E_{k}) \ ,
		\end{align*}
		which proves the claim.
	\end{proof}
	
\subsection{Geodesic random walks revisited}\label{sec:geodesics-revisited}
	Let $\nabla$ be an $\HH$-compatible or an $\HH$-normal connection. We are interested in the question of when a geodesic random walk with respect to $\nabla$ (i.e., walking along curves that satisfy $\nabla_{\dot \gamma}\dot \gamma = 0$) converges to the correct horizontal Brownian motion. 
	
With regards to the results of Section~\ref{sec:rb-rw}, we answer this question by relating geodesics arising from compatible (or normal) connections to second-order retractions. 
In order to do so, we require the notion of the \emph{exponential map} of an affine connection $\nabla$, by which we mean the map $\Exp_{x}:T_{x}M\to M$, where $\Exp_{x}(u):=\gamma(1)$ and $\gamma$ is the $\nabla$-geodesic  (i.e., the curve satisfying $\nabla_{\dot \gamma}\dot \gamma =0$) with initial conditions $(x,u)$. With this notation at hand, we record the following result:
\begin{proposition}\label{prop:comp-norm-second-order}
Let $(\HH,h)$ be a sub-Riemannian structure with complement $\VV$, and let $\nabla$ be an $\HH$-compatible (or $\HH$-normal) connection.  Then the exponential map $\Exp\lvert_{\HH}$ of $\nabla$, when restricted to $\HH$, is a second-order horizontal retraction if and only if the torsion of $\nabla$ satisfies  $T(\HH, \HH) \subset \VV$.
\end{proposition}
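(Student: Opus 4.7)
The plan is to first use the adjoint construction to reduce the two cases (normal and compatible) to a single one, and then to perform a direct covariant-derivative computation along a normal geodesic. If $\nabla$ is $\HH$-normal, its adjoint $\hat\nabla$ is $\HH$-compatible by Proposition~\ref{prop:equiv-normal-compatible} and shares the same geodesics by Remark~\ref{rem:same-geodesics}, so the two exponential maps coincide; since $\hat T = -T$, the torsion condition $T(\HH,\HH)\subset\VV$ passes to the adjoint. It therefore suffices to treat the $\HH$-compatible case.

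Fix $x\in M$ and $u\in\HH_x$, let $\gamma(t):=\Exp_x(tu)$ (so $\nabla_{\dot\gamma}\dot\gamma\equiv 0$ by construction), and let $\tilde\gamma$ be the normal geodesic with horizontal initial conditions $(x,u)$, so that $\dot{\tilde\gamma}(t)=\alpha(t)^\sharp$ where $\alpha$ solves the Hamiltonian system~\eqref{eq:normal-geod-Hamiltonian-1}. By Remark~\ref{rem:second-order-agreement-connection}, second-order agreement of $\gamma$ and $\tilde\gamma$ at $t=0$ is equivalent to $\nabla_{\dot{\tilde\gamma}}\dot{\tilde\gamma}\big|_{t=0}=0$.

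The crucial step is to rewrite this covariant derivative using compatibility. Remark~\ref{rem:comp-conn-prop} gives $(\nabla_X\beta)^\sharp = \nabla_X\beta^\sharp$ for every covector field $\beta$, and hence along $\tilde\gamma$, $\nabla_{\dot{\tilde\gamma}}\dot{\tilde\gamma} = \bigl(\tfrac{\nabla\alpha}{dt}\bigr)^{\!\sharp}$. A coordinate calculation at $t=0$ that substitutes Hamilton's equation $\dot p_i = -\tfrac12\partial_i g^{jk}p_jp_k$ and invokes the compatibility identity $\partial_k g^{ij}+g^{\ell j}\Gamma^i_{k\ell}+g^{i\ell}\Gamma^j_{k\ell}=0$ should cause all symmetric (``metric'') contributions to cancel, leaving a pure torsion term $(\tfrac{\nabla\alpha}{dt})_i\big|_{t=0} = \alpha_u(T(\partial_i,u))$. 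Consequently $\nabla_{\dot{\tilde\gamma}}\dot{\tilde\gamma}|_{t=0}=0$ precisely when the covector $X\mapsto \alpha_u(T(X,u))$ lies in $\HH^0=\ker\sharp$, i.e.\ when $\alpha_u(T(X,u))=0$ for every $X\in\HH_x$.

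To finish, I would translate this into the stated torsion condition. Splitting $T(X,u)=\mathrm{pr}_{\HH}T(X,u)+\mathrm{pr}_{\VV}T(X,u)$ and using that $\alpha_u$ annihilates $\VV$ while coinciding with $h(u,\cdot)$ on $\HH$, the vanishing above becomes $h(u,\mathrm{pr}_{\HH}T(X,u))=0$ for all $u,X\in\HH_x$. Sufficiency ($T(\HH,\HH)\subset\VV$ implies the second-order retraction property) is then immediate, and necessity follows by polarizing this quadratic-in-$u$ identity and combining with the antisymmetry $T(X,u)=-T(u,X)$ to conclude $\mathrm{pr}_{\HH}T=0$ on $\HH\times\HH$. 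The main obstacle I anticipate is the coordinate calculation extracting the torsion: after symmetrizing the asymmetric Christoffel contraction $\Gamma^j_{ki}g^{k\ell}p_\ell p_j$, the compatibility identity must be applied carefully so that the skew-part $\Gamma^j_{ki}-\Gamma^j_{ik}$ surfaces as the torsion while the remaining metric derivatives cancel exactly.
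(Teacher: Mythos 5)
Your reduction to the compatible case via the adjoint connection and your use of Remark~\ref{rem:second-order-agreement-connection} coincide with the paper's setup. For the ``if'' direction your execution differs: the paper argues invariantly, starting from $\hat\nabla_{\dot\gamma}\alpha\equiv 0$ for the adjoint normal connection and unwinding $(\hat\nabla_{\dot\gamma}\alpha)(X)=0$ with compatibility to obtain $h(\nabla_{\dot\gamma}\dot\gamma,X)=-\alpha\left(T(\dot\gamma,X)\right)$, whereas you compute $\nabla_{\dot{\tilde\gamma}}\dot{\tilde\gamma}=(\nabla_{\dot{\tilde\gamma}}\alpha)^{\sharp}$ in coordinates from Hamilton's equations and the identity $\partial_k g^{ij}+g^{\ell j}\Gamma^i_{k\ell}+g^{i\ell}\Gamma^j_{k\ell}=0$. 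Your coordinate identity $(\nabla_{\dot{\tilde\gamma}}\alpha)_i\big|_{t=0}=\alpha_u\left(T(\partial_i,u)\right)$ is correct --- the symmetric Christoffel contributions cancel exactly as you anticipate and only the skew part $\Gamma^a_{ic}-\Gamma^a_{ci}$ survives --- so sufficiency goes through; the paper's derivation of the same torsion term is simply coordinate-free.

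The gap is in your necessity argument. From $h\left(u,\mathrm{pr}_{\HH}T(X,u)\right)=0$ for all $X,u\in\HH_x$, polarizing in $u$ gives antisymmetry of $B(X,Y,Z):=h\left(Z,\mathrm{pr}_{\HH}T(X,Y)\right)$ in its last two slots, and antisymmetry of $T$ gives antisymmetry in its first two; since the transpositions $(12)$ and $(23)$ generate $S_3$, these two facts only show that $B$ is \emph{totally antisymmetric}, not that it vanishes. For $\mathrm{rank}\,\HH\ge 3$ nonzero totally antisymmetric trilinear forms exist, so the argument does not close --- and the failure is not a removable technicality. Take $\HH=TM=T\mathbb{R}^{3}$ with the Euclidean metric, $\VV=\{0\}$, and the flat metric connection whose torsion is the cross product, $T(X,Y)=X\times Y$: this connection is $\HH$-compatible, its geodesics are straight lines and hence coincide with the normal geodesics, so $\Exp\lvert_{\HH}$ is (exactly, not just to second order) a horizontal retraction of normal geodesics, yet $T(\HH,\HH)\not\subset\VV$. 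What your computation actually characterizes is the weaker condition that $h\left(\cdot,\mathrm{pr}_{\HH}T(\cdot,\cdot)\right)$ be totally antisymmetric on $\HH$, which is equivalent to $T(\HH,\HH)\subset\VV$ only when $\mathrm{rank}\,\HH\le 2$. Note that the paper itself writes out only the backward implication and asserts the forward one is ``similar''; your attempt shows that the forward direction cannot be obtained by polarization alone, and the example above indicates it requires an additional hypothesis.
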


\begin{proof}
According to Remark~\ref{rem:same-geodesics}, it suffices to consider compatible connections. We only show the backward direction of the if-and-only-if statement; the proof of the forward direction is similar. 

Let $\nabla$ denote an $\HH$-compatible affine connection on $M$, whose torsion (by assumption) satisfies $T(\HH, \HH) \subset \VV$, and let $\hat \nabla = \nabla - T(\cdot, \cdot)$ be the adjoint $\HH$-normal connection; see Proposition~\ref{prop:equiv-normal-compatible}. Furthermore, let  $\gamma:[0,\varepsilon) \to M$ be the normal geodesic with horizontal initial conditions $(x,u)$. Denote by $\alpha(t)$ the Hamiltonian curve $\alpha :[0,\varepsilon)\to T^{*}M$ corresponding to the normal geodesic $\gamma$, i.e., $\dot\alpha(t) = X_{H}|_{\alpha(t)}$, $\alpha(t)^{\sharp}= \dot\gamma (t)$, and $\alpha(0)|_{\VV} = 0$.
		Since $\hat \nabla$ is normal, it holds that
		\begin{align*}
		\hat\nabla_{\dot \gamma} \alpha \equiv 0 \ .
		\end{align*}
		Now let $X\in \Gamma(\HH)$ be a horizontal vector field. 
		Then, using that $\nabla$ is compatible, we obtain that
		\begin{align*}
		0=\left( \hat\nabla_{\dot \gamma} \alpha \right)(X) 
		&= \dot \gamma \left( \alpha (X)\right) - \alpha \left( \hat \nabla_{\dot \gamma}X\right) 
		=  \dot \gamma \left( h(\alpha^{\sharp}, X)\right) - \alpha \left( \hat \nabla_{\dot \gamma}X\right) \\
		&= h(\nabla_{\dot \gamma}\alpha^{\sharp}, X)  + h(\alpha^{\sharp}, \nabla_{\dot \gamma}X) - \alpha \left( \hat \nabla_{\dot \gamma}X\right) \\
		&= h(\nabla_{\dot \gamma}\alpha^{\sharp}, X)  + \alpha \left( \nabla_{\dot \gamma}X\right) - \alpha \left( \hat \nabla_{\dot \gamma}X\right)\\
		&= h(\nabla_{\dot \gamma}\dot\gamma, X)  + \alpha \left( T(\dot \gamma, X)\right) \ .
		\end{align*}
		Evaluating the right hand side at $t=0$, and using that $ T(\dot \gamma, X)$ is vertical as well as $\alpha(0)|_{\VV}=0$, we obtain that 
		$$
		h(\nabla_{\dot \gamma}\dot\gamma, X)|_{\gamma(0)} = 0 \ .
		$$
		Using that $X$ was an arbitrary horizontal field shows that $\nabla_{\dot \gamma}\dot \gamma|_{\gamma(0)} = 0$. Using Remark~\ref{rem:second-order-agreement-connection} then proves the claim.
\end{proof}	

\begin{corollary}\label{cor:second-order-retractions-partial-conn}
Let $(\HH,h)$ be a sub-Riemannian structure with complement $\VV$, and let $\nabla^{\VV}$ be the unique partially compatible connection whose torsion satisfies $T(\HH,\HH)\subset \VV$. Then $\nabla^{\VV}$-geodesics with horizontal initial conditions provide second-order horizontal retractions. 
\end{corollary}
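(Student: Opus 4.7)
The plan is to reduce the statement to Proposition~\ref{prop:comp-norm-second-order} by passing from the partial connection $\nabla^{\VV}$ to a full $\HH$-compatible extension on $M$, and then checking that the two notions of geodesic coincide on horizontal initial conditions.

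First I would invoke Proposition~\ref{prop:existence-comp-extension} to obtain an $\HH$-compatible affine connection $\nabla$ on $M$ whose torsion $T$ satisfies $T(\HH,\HH)\subset\VV$. Because $\nabla$ is $\HH$-compatible, its restriction to $\Gamma(\HH)\times\Gamma(\HH)$ takes values in $\Gamma(\HH)$ and is partially compatible with $h$. Moreover, for $X,Y\in\Gamma(\HH)$ the partial torsion satisfies $\nabla_{X}Y-\nabla_{Y}X-[X,Y]=T(X,Y)\in\VV$. The uniqueness statement in Proposition~\ref{prop:part-comp-conn} then forces this restriction to coincide with $\nabla^{\VV}$.

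Next I would identify $\nabla^{\VV}$-geodesics with horizontal initial conditions with $\nabla$-geodesics starting horizontally. Since $\nabla$ is $\HH$-compatible, parallel transport preserves $\HH$; applied to a $\nabla$-geodesic $\gamma$ with $\dot\gamma(0)\in\HH_{x}$, this forces $\dot\gamma(t)\in\HH_{\gamma(t)}$ for all $t$ in the domain of existence, so $\gamma$ is horizontal. Along such a horizontal curve, the previous paragraph gives $\nabla_{\dot\gamma}\dot\gamma=\nabla^{\VV}_{\dot\gamma}\dot\gamma$, so $\gamma$ is a $\nabla$-geodesic if and only if it is a $\nabla^{\VV}$-geodesic. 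In particular the two exponential maps agree on $\HH$. Applying Proposition~\ref{prop:comp-norm-second-order} to $\nabla$ then yields a second-order horizontal retraction which, by the preceding identification, is exactly the map provided by $\nabla^{\VV}$-geodesics with horizontal initial conditions.

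The only mildly delicate point is the uniqueness step at the start, namely that the restriction of \emph{any} full $\HH$-compatible extension of $\nabla^{\VV}$ really is the canonical partial connection from Proposition~\ref{prop:part-comp-conn}; this is immediate once partial compatibility and the partial torsion condition have been noted. Everything else is a straightforward consequence of compatibility (for invariance of $\HH$ under parallel transport) together with the already-established Proposition~\ref{prop:comp-norm-second-order}.
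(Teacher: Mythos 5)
Your proposal is correct and follows essentially the same route as the paper's own proof: extend $\nabla^{\VV}$ to a compatible connection with $T(\HH,\HH)\subset\VV$ via Proposition~\ref{prop:existence-comp-extension} and then invoke Proposition~\ref{prop:comp-norm-second-order}. The only difference is that you spell out two points the paper leaves implicit -- that any such extension restricts to $\nabla^{\VV}$ by the uniqueness in Proposition~\ref{prop:part-comp-conn}, and that $\nabla$-geodesics with horizontal initial data stay horizontal and hence coincide with $\nabla^{\VV}$-geodesics -- which is a welcome clarification rather than a deviation.
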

\begin{proof}
Existence of $\nabla^{\VV}$ follows by Proposition~\ref{prop:part-comp-conn}. By Proposition~\ref{prop:existence-comp-extension}, $\nabla^{\VV}$ can be extended to a compatible connection whose torsion satisfies $T(\HH,\HH)\subset \VV$. Then Proposition~\ref{prop:comp-norm-second-order} implies the claim.
\end{proof}
Corollary~\ref{cor:second-order-retractions-partial-conn} and Theorem~\ref{thm:convergence-processes-ret2} immediately yield the following result:

\begin{theorem}\label{thm:scaling-limit-nabla-geod-walk}
Let $M$ be compact, and let $(\HH,h)$ be a \emph{bracket-generating} sub-Riemannian structure  on $M$ with complement $\VV$. Let $\nabla^{\VV}$ be the unique partially compatible connection whose torsion satisfies $T(\HH, \HH)\subset \VV$. Then the horizontal geodesic random walk along $\nabla^{\VV}$-geodesics (with horizontal sampling steps) converges in distribution to the horizontal Brownian motion generated by $\Delta^{\VV}$ as $\varepsilon \to 0$. 
\end{theorem}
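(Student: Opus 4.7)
The plan is a two-step combination of results already established in the excerpt. First, I would reformulate the theorem in the language of Definition~\ref{def:ret-rand-walk} by identifying the proposed walk with a retraction-based random walk for a specific retraction $\Ret$. Since $\nabla^{\VV}$ is only a \emph{partial} connection, the phrase ``$\nabla^{\VV}$-geodesic'' needs interpretation. I would first invoke Proposition~\ref{prop:existence-comp-extension} to extend $\nabla^{\VV}$ to a genuine $\HH$-compatible affine connection $\nabla$ on $M$ whose torsion still satisfies $T(\HH,\HH)\subset \VV$, and then define $\Ret_x(u):=\gamma_{x,u}(1)$, where $\gamma_{x,u}$ is the $\nabla$-geodesic (equivalently, the exponential map of $\nabla$) starting from the horizontal initial condition $(x,u)\in \HH$.

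A brief consistency check is needed here: since $\nabla$ is $\HH$-compatible, parallel transport preserves $\HH$, so $\gamma_{x,u}$ stays horizontal, and hence the equation $\nabla_{\dot\gamma}\dot\gamma=0$ only involves the action of $\nabla$ on horizontal fields along a horizontal curve. This action coincides with the partial connection $\nabla^{\VV}$ by construction of the extension. In particular, $\gamma_{x,u}$ is intrinsic to $\nabla^{\VV}$ and does not depend on the chosen compatible extension, so the label ``$\nabla^{\VV}$-geodesic'' is unambiguous.

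Second, Corollary~\ref{cor:second-order-retractions-partial-conn} applied to $\Ret$ tells us that $\Ret$ is a second-order horizontal retraction. With $(\HH,h)$ bracket-generating by assumption, Theorem~\ref{thm:convergence-processes-ret2} is then directly applicable: the stochastic processes $(X_t^\varepsilon)_{t>0}$ from Equation~\eqref{eq:def-stoch-proc} produced by the retraction-based walk with this $\Ret$ converge in distribution to horizontal Brownian motion with generator $\Delta^{\VV}$. Since the horizontal geodesic random walk along $\nabla^{\VV}$-geodesics is exactly this retraction-based walk, the theorem follows. I do not expect any real obstacle; the only mild subtlety is the well-definedness remark in the previous paragraph, which I would state explicitly before citing the corollary.
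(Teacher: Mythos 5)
Your proposal is correct and follows essentially the same route as the paper, which proves this theorem in one line by combining Corollary~\ref{cor:second-order-retractions-partial-conn} with Theorem~\ref{thm:convergence-processes-ret2}. The extension of $\nabla^{\VV}$ to a full compatible connection via Proposition~\ref{prop:existence-comp-extension} that you spell out is already the content of the proof of Corollary~\ref{cor:second-order-retractions-partial-conn}; your well-definedness check (that the horizontal geodesics are intrinsic to $\nabla^{\VV}$, independent of the chosen extension) is a sound and worthwhile clarification but does not change the argument.
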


\begin{remark}
Notice that the claim of Theorem~\ref{thm:scaling-limit-nabla-geod-walk} remains valid if instead of the unique partially compatible connection with $T(\HH, \HH)\subset \VV$ one considers compatible or normal connections with $T(\HH, \HH)\subset \VV$.
\end{remark}

For completeness we point out that the result of Proposition~\ref{prop:comp-norm-second-order} can be sharpened for metric-preserving complements: 	
	\begin{proposition}\label{prop:comp-norm-full-order}
		Let $(\HH,h)$ be a sub-Riemannian structure with metric-preserving complement $\VV$. Consider a compatible connection $\nabla$ from Proposition~\ref{prop:existence-comp-extension-sharpened}. Then any normal geodesic $\gamma:[0,\varepsilon) \to M$ with horizontal initial conditions satisfies
		$
		\nabla_{\dot \gamma}\dot \gamma \equiv  0 \ .
		$
In other words, for such connections $\nabla$, normal geodesics agree with $\nabla$-geodesics. 	\end{proposition}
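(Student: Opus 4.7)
The plan is to extend the pointwise identity obtained in the proof of Proposition~\ref{prop:comp-norm-second-order} from $t=0$ to all $t$ by exploiting the extra structural properties of the connection from Proposition~\ref{prop:existence-comp-extension-sharpened}. Write $\hat\nabla$ for the adjoint of $\nabla$; by Proposition~\ref{prop:equiv-normal-compatible}, $\hat\nabla$ is $\HH$-normal. Let $\alpha(t)$ denote the Hamiltonian lift of $\gamma$ with $\alpha^\sharp = \dot\gamma$ and $\alpha(0)|_{\VV}=0$. Since $\hat\nabla$ is normal, $\hat\nabla_{\dot\gamma}\alpha \equiv 0$, so $\alpha(t)$ is parallel transported along $\gamma$ with respect to $\hat\nabla$. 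Exactly as in the proof of Proposition~\ref{prop:comp-norm-second-order}, compatibility of $\nabla$ combined with $\hat\nabla_{\dot\gamma}\alpha = 0$ yields, for every horizontal $X \in \Gamma(\HH)$,
\begin{equation*}
0 = h(\nabla_{\dot\gamma}\dot\gamma, X) + \alpha\bigl(T(\dot\gamma, X)\bigr) \ ,
\end{equation*}
now valid at every $t \in [0,\varepsilon)$ and not only at $t=0$.

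The crux is therefore to show that $\alpha(t)|_{\VV}=0$ persists for all $t$, i.e.\ that the $\hat\nabla$-parallel transport along the horizontal curve $\gamma$ preserves the annihilator $\VV^0 \subset T^\ast M$. By duality this is equivalent to the statement that the $\hat\nabla$-parallel transport along $\gamma$ preserves $\VV \subset TM$, which in turn is equivalent to $\hat\nabla_X Y \in \Gamma(\VV)$ for every $X\in \Gamma(\HH)$ and $Y \in \Gamma(\VV)$. Here the special properties of the connection from Proposition~\ref{prop:existence-comp-extension-sharpened} come into play: because $T(\HH,\VV)=0$, we have $\hat\nabla_X Y = \nabla_X Y - T(X,Y) = \nabla_X Y$ for such $X,Y$, and because $\nabla$ preserves $\VV$ under parallel transport, $\nabla_X Y \in \Gamma(\VV)$. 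This step is where the metric-preserving hypothesis on $\VV$ enters the argument (implicitly, via the existence of the sharpened compatible connection).

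Having established $\alpha(t)|_{\VV}=0$ for all $t$, the torsion term in the displayed identity vanishes: since $\dot\gamma$ and $X$ are both horizontal, $T(\dot\gamma,X)\in \Gamma(\VV)$ by the hypothesis $T(\HH,\HH)\subset\VV$, so $\alpha(T(\dot\gamma,X))=0$. Thus $h(\nabla_{\dot\gamma}\dot\gamma, X)=0$ for every horizontal $X$. Finally, $\HH$-compatibility of $\nabla$ ensures that $\nabla_{\dot\gamma}\dot\gamma$ is itself horizontal, and non-degeneracy of $h$ on $\HH$ yields $\nabla_{\dot\gamma}\dot\gamma \equiv 0$.

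The main obstacle is precisely the invariance step for $\alpha(t)|_{\VV}$; without the simultaneous conditions $T(\HH,\VV)=0$ and $\nabla$-invariance of $\VV$ it would be false in general (and indeed the weaker Proposition~\ref{prop:comp-norm-second-order} only yields vanishing at $t=0$). All remaining steps are essentially a re-run of the computation already carried out in Proposition~\ref{prop:comp-norm-second-order}.
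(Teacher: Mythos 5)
Your proof is correct and rests on the same two computations as the paper's: evaluating $\hat\nabla_{\dot\gamma}\alpha$ on horizontal fields (yielding the identity $0=h(\nabla_{\dot\gamma}\dot\gamma,X)+\alpha(T(\dot\gamma,X))$) and on vertical fields (using $T(\HH,\VV)=0$ together with $\nabla$-invariance of $\VV$). The only difference is the logical direction: the paper starts from the $\nabla$-geodesic, shows that its lift defined by $\alpha|_{\VV}\equiv 0$ is $\hat\nabla$-autoparallel, and concludes via uniqueness of normal geodesics, whereas you start from the normal geodesic, propagate $\alpha(t)|_{\VV}=0$ along the flow by parallel transport, and close with the horizontality of $\nabla_{\dot\gamma}\dot\gamma$ and non-degeneracy of $h$ --- both routes are sound.
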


	\begin{proof}
		Let $\nabla$ be a compatible connection provided by Proposition~\ref{prop:existence-comp-extension-sharpened}. Then $\nabla$  preserves $\HH$ and $\VV$, and its torsion satisfies $T(\HH, \HH)\subset \VV$ and $T(\HH, \VV) = 0$.  Let $\bar \gamma$ be the unique geodesic that satisfies $\nabla_{\dot{\bar\gamma}}\dot{\bar \gamma} \equiv  0$ with initial conditions $x \in M$ and $u\in \HH_{x}$. Denote by $\alpha(t) \in T^{*}_{\gamma(t)}M$ the unique $1$-form along $\bar \gamma$ with $\alpha^{\sharp}(t) = \dot {\bar\gamma}(t)$ and $\alpha(t)|_{\VV}\equiv 0$.
		Furthermore, let $\hat \nabla = \nabla - T(\cdot, \cdot)$ be the adjoint normal connection corresponding to $\nabla$.  We claim that 
		\begin{align}\label{eq:comp-norm-full-order-suffice}
		\hat \nabla_{\dot{ \bar\gamma}} \alpha \equiv 0 \ .
		\end{align}
		Proving~\eqref{eq:comp-norm-full-order-suffice} suffices for proving the proposition, since then $\alpha$ is the unique autoparallel curve with the requisite horizontal initial conditions.  In order to show~\eqref{eq:comp-norm-full-order-suffice}, we consider the application of $\hat \nabla _{\dot \gamma}\alpha$ to horizontal and vertical vector fields separately.
		
		First let $X\in \Gamma(\HH)$ be a horizontal vector field. Then,
		\begin{align*}
		\left( \hat\nabla_{\dot{ \bar\gamma}}  \alpha \right)(X) 
		&= {\dot{ \bar\gamma}} \left( \alpha (X)\right) - \alpha \left( \hat \nabla_{\dot{ \bar\gamma}} X\right) 
		=  {\dot{ \bar\gamma}}  \left( h(\alpha^{\sharp}, X)\right) - \alpha \left( \hat \nabla_{\dot{ \bar\gamma}} X\right) \\
		&= h(\nabla_{\dot{ \bar\gamma}} \alpha^{\sharp}, X)  + h(\alpha^{\sharp},  \nabla_{\dot{ \bar\gamma}} X) - \alpha \left( \hat \nabla_{\dot{ \bar\gamma}} X\right) \\
		&= h(\nabla_{\dot{ \bar\gamma}} {\dot{ \bar\gamma}} , X)  + \alpha \left( T({\dot{ \bar\gamma}} , X)\right)  = \alpha \left( T({\dot{ \bar\gamma}} , X)\right)= 0 \ ,
		\end{align*}
		where for the last equality we have used that $T({\dot{ \bar\gamma}} , X)$ is vertical. 
		
		Next let $Y\in \Gamma(\VV)$ be a vertical vector field. Using that $\alpha|_{\VV} \equiv 0$ and  $\nabla_{\dot{\bar\gamma}} Y \in \VV$, 
		we obtain
		\begin{align*}
		\left( \hat\nabla_{\dot{\bar \gamma}} \alpha \right)(Y) 
		= - \alpha \left( \hat \nabla_{\dot{\bar \gamma}}Y\right) 
		=  \alpha \left( T ({\dot{\bar \gamma}}, Y) - \nabla_{\dot{\bar \gamma}}Y\right) 
		= \alpha \left( T ({\dot{\bar \gamma}}, Y) \right)   = 0 \ ,
		\end{align*}
		where for the last equality we used that $T(\HH, \VV) = 0$. This proves ~\eqref{eq:comp-norm-full-order-suffice}.
	\end{proof}

	\section{Frame bundles}\label{sec:frame-bundles} 	
We apply the previous results to the specific case of frame bundles. Our motivation for considering frame bundles is twofold. For one, frame bundles constitute a prime example of sub-Riemannian structures that additionally come with a canonical choice of a complement $\VV$. Secondly, frame bundles over manifolds with affine connections provide a natural setting for treating anisotropic (or biased) Brownian motion; see Theorem~\ref{thm:conv-ret-frame-bundle} and Remark~\ref{rem:frame-bundle-anisotropic-BM}.

Given a smooth manifold $M$ of dimension $n$, the \emph{frame bundle} $\pi: \FF\to M$ is the $\mathrm{GL}(n;\mathbb{R})$ principal bundle over $M$ whose fibers $\mathcal{F}_{x}$ consist of all ordered bases of $T_{x}M$. One frequently identifies elements $F\in \FF_{x}$ with linear isomorphisms 
	$$
	F: \mathbb{R}^{n}\to T_{x}M \ ,
	$$
	which map the standard basis of $\mathbb{R}^{n}$ to the basis of $T_{x}M$ corresponding to a given frame $F$. Then the linear group $\mathrm{GL}(n;\mathbb{R})$ naturally acts on the right on elements of $\FF$.  As usual, let the canonical \emph{vertical bundle} be defined by $\VV := \ker (d\pi) \subset T\FF$. 
	
	Given an \emph{arbitrary} affine connection $\nabla$ on $M$, there exists a natural sub-Riemannian structure $(\HH^{\nabla}, h^{\nabla})$ on $\FF$ with complement $\VV$, constructed as follows. As usual, let $\HH^{\nabla} \subset T\FF$ be the horizontal 
	subbundle, defined by requiring that horizontal curves $\gamma: [a,b]\to \FF$ correspond to parallel transport along the projected curves $\pi\circ \gamma$ with respect to the given affine connection $\nabla$ on $M$. Then, in particular, the restriction $$d\pi|_{\HH^{\nabla}_{F}}: \HH^{\nabla}_{F} \to T_{\pi(F)}M$$  is a linear isomorphism of vector spaces for all $F\in \FF$. Clearly, one has $\HH^{\nabla} \oplus \VV = T\FF$, and $\VV$ is indeed a complement of $\HH^{\nabla}$.
	
	In order to construct a sub-Riemannian metric $h^{\nabla}$ on $\HH^{\nabla}$, consider the \emph{solder form}, sometimes also called the tautological $1$-form, $\theta \in \Omega^{1}(\FF; \mathbb{R}^{n})$ whose values on a tangent vector $u\in T_{F}\FF$ are the coefficients of the projected vector $d\pi(u) \in T_{\pi(F)}M$ in the frame given by $F$. Put differently, if we regard $F$ as a linear isomorphism $\mathbb{R}^{n}\to T_{\pi(F)}M$, then $\theta_{F}(u) = F^{-1}(d\pi(u))$. In particular, $\theta(\VV) \equiv 0$, and the restriction
\begin{align}\label{eq:solder-form-restricted}
	\phi_{F}:=  \theta_{F}|_{\HH^{\nabla}_{F}}:\HH^{\nabla}_{F} \to \mathbb{R}^{n} 
\end{align}
	yields an isomorphism of vector spaces. This allows for defining a canonical metric on $\HH^{\nabla}$:
	
	\begin{definition}[Sub-Riemannian structure on frame bundle]\label{def:sub-Riemannian-frame-bundle}
Define a sub-Riemannian metric $h^{\nabla}$ on $\HH^{\nabla}$ by requiring that $\phi_{F}$ is an \emph{isometry}, where $\mathbb{R}^{n}$ is equipped with the standard Euclidean metric. 
	\end{definition}
	In other words,  $h^{\nabla}$ is the sub-Riemannian metric on $\HH^{\nabla}$ constructed as follows: Regarding each frame $F\in \FF$ as a basis of $T_{\pi(F)}M$, one defines the \emph{horizontal lift} of the attendant  basis vectors to $\HH^{\nabla}_{F}$ to be an \emph{orthonormal frame} in $\HH^{\nabla}_{F}$.
	
	Consider also the  \emph{connection $1$-form} $\omega \in \Omega^{1}(\FF; \mathfrak{gl}(n))$ corresponding to the horizontal subbundle $\HH^{\nabla}\subset \FF$ and taking values in the Lie algebra $\mathfrak{gl}(n)$. Recall that $\omega$ is defined by requiring that $\omega(\HH^{\nabla})\equiv 0$ and
	$$
	\omega_{F}\left(\frac{d}{dt} (F\cdot g(t))\big\rvert_{t=0}\right) = \frac{d}{dt} g(t)\big\rvert_{t=0} \in \mathfrak{gl}(n)
	$$
	for any smooth curve $g: [0, \varepsilon) \to \mathrm{GL}(n;\mathbb{R})$ with $g(0) = \mathrm{Id}$. In particular, the restriction
	$$
	\psi_{F}:=  \omega_{F}|_{\VV_{F} }:\VV_{F} \to \mathfrak{gl}(n) 
	$$
	is an isomorphism of vector spaces. Together with the solder form $\theta$, this allows for defining a canonical compatible connection for the sub-Riemannian structure  $(\HH^{\nabla}, h^{\nabla})$:
	
	\begin{definition}[Compatible connection on frame bundle]\label{def:comp-conn-frame-bundle}
		Let $\nabla$ be an affine connection on $M$, let $\gamma:[0,\varepsilon)\to \FF$ be an arbitrary smooth curve in the frame bundle with $\gamma(0)=F$, and let $Y(t) \in T_{\gamma(t)}\FF$ be a vector field along $\gamma$. Define an affine connection $\bar\nabla$ on $\FF$ by 
		\begin{align}\label{eq:part-conn-framebundle}
		\left(\bar\nabla_{\dot\gamma} Y \right)\big\rvert_F:= 
		\left(\phi_{F}\right)^{-1}\left(\frac{d}{dt} \theta_{\gamma(t)}(Y(t)) \big\rvert_{t=0} \right)  +
		\left(\psi_{F}\right)^{-1}\left(\frac{d}{dt} \omega_{\gamma(t)}(Y(t))\big\rvert_{t=0} \right)  \ .
		\end{align}
	\end{definition}
	
	\begin{remark}
	   The connection $\bar \nabla$ corresponds to the Cartan connection $(\theta +\omega)$ regarded as an affine connection on the frame bundle $\FF$. We refer the reader to, e.g.,~\cite{Beschastnyi2021,Grong2022-notes,MORIMOTO2008} for more details on the relation between Cartan connections and sub-Riemannian structures.
	\end{remark}
	
In particular, starting with a \emph{horizontal} vector $u_{0} \in \HH^{\nabla}_{f}$, parallel transport $u(t)$ with respect to $\bar \nabla$ along $\gamma(t)$ corresponds to insisting that $u(t)$ stay horizontal and that $\phi_{\gamma(t)}(u(t)) = \phi_{f}(u_{0})$. Therefore, parallel transport with respect to $\bar \nabla$ preserves $\HH^{\nabla}$ together with the inner product $h^{\nabla}$. It follows that $\bar \nabla$ is indeed a \emph{compatible connection} in the sense of Definition~\ref{def:def-comp-conn}. 
	
Moreover, if $X$ and $Y$ are vector fields on $M$ with horizontal lifts $\bar X$ and $\bar Y$, respectively, then
\begin{align}\label{eq:lifts-commute-with-connection}
	\bar\nabla_{\bar X} \bar Y = \overline{\nabla_{X} Y} \ ,
\end{align}
where the right hand side denotes the horizontal lift of $\nabla_{X} Y$. This follows from the fact that a vector that is parallel transported along a curve in the base manifold $M$ does not change its coordinates in any frame that is parallel transported along the same curve.

\begin{remark}
The connection $\bar \nabla$ defined by~\eqref{eq:part-conn-framebundle} is a metric connection for the bundle metric considered by Marathe, see~\cite{Marathe1972}, i.e., parallel transport with respect to $\bar \nabla$ preserves this metric. Other extensions of $\nabla$ to affine connections on the frame bundle have been considered in the literature, most prominently the \emph{canonical} and \emph{horizontal} lifts, see, e.g., \cite{Catuogno2008,Cordero1989,Mok1978}. However, different from $\bar\nabla$, the canonical lift does not satisfy an equality equivalent to \eqref{eq:lifts-commute-with-connection}, and neither the canonical lift nor the horizontal lift preserves the sub-Riemannian metric $h^{\nabla}$, i.e., these lifts do not provide compatible connections for our choice of sub-Riemannian metric.
\end{remark}

Using \eqref{eq:lifts-commute-with-connection}, a straightforward calculation shows that if the original connection $\nabla$ on $M$ is torsion-free, then the torsion $\bar T$ of $\bar \nabla$ satisfies $\bar T(\HH^{\nabla}, \HH^{\nabla}) \subset \VV$. 
Therefore, by Proposition~\ref{prop:part-comp-conn}, the restriction of $\bar \nabla$ to $\HH^{\nabla}$ is the unique partially compatible connection with this property. By Corollary~\ref{cor:second-order-retractions-partial-conn} $\bar \nabla$-geodesics with horizontal initial conditions are second-order horizontal retractions. Moreover, 
by \eqref{eq:lifts-commute-with-connection}, it follows that $\bar\nabla$-geodesics on $\FF$ with horizontal initial conditions are in one-to-one correspondence to parallel transport of frames along $\nabla$-geodesics on the base manifold $M$. This implies the following result:
\begin{proposition}\label{prop:parallel-transport-frames-second-order-ret}
Let $\nabla$ be a torsion-free affine connection on $M$. Then parallel transport of frames along $\nabla$-geodesics on $M$ yields a second-order horizontal retraction on the frame bundle.
\end{proposition}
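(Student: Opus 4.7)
The plan is to string together the commutation identity $\eqref{eq:lifts-commute-with-connection}$, Corollary~\ref{cor:second-order-retractions-partial-conn}, and the geometric interpretation of $\bar\nabla$-geodesics with horizontal initial data, essentially formalizing the paragraph preceding the proposition.

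First, I would verify that the torsion $\bar T$ of $\bar\nabla$ satisfies $\bar T(\HH^{\nabla}, \HH^{\nabla}) \subset \VV$. Using horizontal lifts $\bar X, \bar Y$ of vector fields $X, Y$ on $M$, the commutation identity $\eqref{eq:lifts-commute-with-connection}$ gives
\[
\bar T(\bar X, \bar Y) = \bar\nabla_{\bar X}\bar Y - \bar\nabla_{\bar Y}\bar X - [\bar X, \bar Y] = \overline{\nabla_X Y} - \overline{\nabla_Y X} - [\bar X, \bar Y].
\]
Since $d\pi$ intertwines brackets of horizontal lifts with brackets on $M$, the horizontal projection $\mathrm{pr}_{\HH^{\nabla}}[\bar X, \bar Y]$ equals $\overline{[X,Y]}$. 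The torsion-freeness of $\nabla$ then yields $\mathrm{pr}_{\HH^{\nabla}}\bar T(\bar X, \bar Y) = 0$, so $\bar T(\bar X, \bar Y) \in \VV$, and by $C^{\infty}(\FF)$-linearity in $\bar X, \bar Y$ the same holds for all horizontal vectors, not just horizontal lifts.

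Second, since $\bar \nabla$ is $\HH^{\nabla}$-compatible and its torsion satisfies $\bar T(\HH^{\nabla}, \HH^{\nabla}) \subset \VV$, the restriction $\bar\nabla|_{\HH^{\nabla}}$ coincides with the unique partially compatible connection of Proposition~\ref{prop:part-comp-conn}. Corollary~\ref{cor:second-order-retractions-partial-conn} then applies and shows that $\bar \nabla$-geodesics with horizontal initial conditions provide second-order horizontal retractions on $\FF$.

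Third, I would identify these $\bar \nabla$-geodesics with horizontal initial conditions as parallel-transported frames along $\nabla$-geodesics on $M$. Given $F \in \FF$ and $u \in \HH^{\nabla}_{F}$, let $\gamma$ be the $\nabla$-geodesic on $M$ with $\gamma(0) = \pi(F)$ and $\dot \gamma(0) = d\pi(u)$, and let $F(t)$ be the $\nabla$-parallel transport of $F$ along $\gamma$. Then $F(t)$ is horizontal in $\FF$ by definition of $\HH^{\nabla}$, so $\omega(\dot F(t)) \equiv 0$; and since $\phi_{F(t)}(\dot F(t)) = F(t)^{-1}\dot\gamma(t)$ is constant (because both $F(t)$ and $\dot\gamma(t)$ are parallel transported along $\gamma$), the defining formula~\eqref{eq:part-conn-framebundle} yields $\bar\nabla_{\dot F}\dot F \equiv 0$. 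With $\dot F(0) = u$, the curve $F(\cdot)$ is therefore \emph{the} $\bar\nabla$-geodesic with horizontal initial condition $(F,u)$. Combining this identification with the second step proves the claim.

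The only nontrivial step is the torsion computation in the first paragraph, and this is more bookkeeping than obstacle since the commutation identity $\eqref{eq:lifts-commute-with-connection}$ does the main work. Everything else is an application of results already proved in the preceding sections.
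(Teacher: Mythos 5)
Your proposal is correct and follows essentially the same route as the paper: establish $\bar T(\HH^{\nabla},\HH^{\nabla})\subset\VV$ from \eqref{eq:lifts-commute-with-connection} and torsion-freeness of $\nabla$, invoke Corollary~\ref{cor:second-order-retractions-partial-conn}, and identify $\bar\nabla$-geodesics with horizontal initial data with parallel-transported frames along $\nabla$-geodesics. You merely fill in details the paper leaves as a ``straightforward calculation'' (the torsion computation and the verification via \eqref{eq:part-conn-framebundle} that parallel frames are $\bar\nabla$-geodesics), and all of these details check out.
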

Notice that the claim of Proposition~\ref{prop:parallel-transport-frames-second-order-ret} remains valid for any smooth subbundle $\mathcal{E} \subseteq \FF$ (principal or not) as long as $\mathcal{E}$ is preserved under parallel transport along curves in $M$. Orthogonal and anisotropic frame bundles over Riemannian manifolds (see Section~\ref{sec:algo}) provide examples of such subbundles. Upon restricting to \emph{compact} subbundles, Proposition~\ref{prop:parallel-transport-frames-second-order-ret} and Theorems~\ref{thm:convergence-generators-ret2} and~\ref{thm:scaling-limit-nabla-geod-walk} yield the following result:

\begin{theorem}\label{thm:conv-ret-frame-bundle}
Let $M$ be an $n$-dimensional smooth, compact manifold without boundary, and let $\nabla$ be a torsion-free affine connection on $M$. Let $\mathcal{E} \subseteq \FF$ be a smooth, compact subbundle that is preserved under parallel transport along curves in $M$. Consider the following algorithm. Pick an initial position $x\in M$ and an initial frame $F: \mathbb{R}^{n}\to T_{x}M$ corresponding to an element in $\mathcal{E}_{x}$. Fix $\varepsilon>0$, and keep repeating the following two steps:

\begin{enumerate}
\item[\mylabel{fb-sample}{($\bar{\mathrm{S}}$)}] Sample a unit vector $\bar u \in \mathbb{S}^{n-1} \subset \mathbb{R}^{n}$ uniformly with respect to the Euclidean metric on $\mathbb{R}^{n}$, and let $u:= F(\bar u) \in T_{x}M$.

\item[\mylabel{fbwalk}{($\bar{\mathrm{W}}$)}] 
Follow the $\nabla$-geodesic with initial conditions $(x,u)$ for time $\varepsilon \sqrt{n}$, and parallel transport $F$ along this geodesic using $\nabla$. Update $x$ to be the endpoint, and update $F$ to be the final frame along this geodesic.
\end{enumerate}
Then the infinitesimal generator of the resulting walk on the bundle $\mathcal{E}$ converges to the sub-Laplacian $\Delta^{\VV}$ on $\mathcal{E}$ as $\varepsilon \to 0$. Moreover, if the sub-Riemannian structure $\HH^{\nabla}$ is bracket-generating for $\mathcal{E}$, 
then the stochastic process resulting from the above walk converges to the horizontal Brownian motion generated by $\Delta^{\VV}$. 
\end{theorem}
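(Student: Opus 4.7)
The plan is to recognize the algorithm of Steps \ref{fb-sample} and \ref{fbwalk} as a retraction-based random walk in the sense of Definition~\ref{def:ret-rand-walk}, performed on the compact manifold $\mathcal{E}$ equipped with the sub-Riemannian structure inherited from $(\HH^{\nabla},h^{\nabla})$ with complement $\VV$. Once this identification is in place, both claims follow directly from Theorems~\ref{thm:convergence-generators-ret2} and~\ref{thm:convergence-processes-ret2} applied on $\mathcal{E}$, and no new analytic ingredient is needed.

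The identification proceeds in three steps. First, I would check that the ambient structure restricts cleanly to $\mathcal{E}$. Since parallel transport preserves $\mathcal{E}$, the horizontal subspaces $\HH^{\nabla}_F$ are contained in $T_F\mathcal{E}$ for every $F\in\mathcal{E}$, have constant rank $k=n$, and admit $\VV\cap T\mathcal{E}$ as a complement within $T\mathcal{E}$; the metric $h^{\nabla}$ of Definition~\ref{def:sub-Riemannian-frame-bundle} restricts without modification. Second, I would reinterpret the sampling step \ref{fb-sample}: by Definition~\ref{def:sub-Riemannian-frame-bundle} the map $\phi_F$ from~\eqref{eq:solder-form-restricted} is an isometry $\HH^{\nabla}_F\to\mathbb{R}^n$, and the horizontal lift $\tilde u\in\HH^{\nabla}_F$ of $u=F(\bar u)$ satisfies $\phi_F(\tilde u)=\bar u$. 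Hence sampling $\bar u$ uniformly on $\mathbb{S}^{n-1}\subset\mathbb{R}^n$ is equivalent to sampling $\tilde u$ uniformly on the sub-Riemannian unit sphere in $\HH^{\nabla}_F$, which is precisely Step \ref{Ret-sample}. Third, I would identify the walking step \ref{fbwalk}: the curve in $\FF$ obtained by parallel-transporting $F$ along the $\nabla$-geodesic from $x$ with initial velocity $u$ is exactly the horizontal lift of that geodesic to $\FF$ starting at $F$, and its initial tangent at $F$ equals $\tilde u$. By Proposition~\ref{prop:parallel-transport-frames-second-order-ret}, combined with the hypothesis that $\mathcal{E}$ is preserved under parallel transport, this assignment defines a second-order horizontal retraction on $\mathcal{E}$. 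Since $k=n$, the step length $\varepsilon\sqrt n$ in \ref{fbwalk} matches $\varepsilon\sqrt k$ in Step \ref{Ret-walk}.

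With all hypotheses of Theorem~\ref{thm:convergence-generators-ret2} verified on the compact manifold $\mathcal{E}$, the convergence of the infinitesimal generators of the resulting walk to $\Delta^{\VV}$ follows at once; under the additional bracket-generating assumption, convergence in distribution of the stochastic processes to horizontal Brownian motion follows from Theorem~\ref{thm:convergence-processes-ret2}. The main obstacle is organisational rather than analytic: one needs to be careful that each ingredient (the horizontal subbundle, the metric, the complement, and the retraction) restricts correctly from $\FF$ to the potentially lower-dimensional compact subbundle $\mathcal{E}$, which is ensured by the assumption that $\mathcal{E}$ is preserved under parallel transport along curves in $M$.
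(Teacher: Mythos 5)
Your proposal is correct and takes essentially the same route as the paper: the paper likewise obtains the theorem by combining Proposition~\ref{prop:parallel-transport-frames-second-order-ret} (parallel transport of frames along $\nabla$-geodesics is a second-order horizontal retraction, a fact that survives restriction to any parallel-transport-invariant subbundle $\mathcal{E}$) with Theorems~\ref{thm:convergence-generators-ret2} and~\ref{thm:convergence-processes-ret2} applied on the compact manifold $\mathcal{E}$. Your additional care in identifying the sampling step via the isometry $\phi_F$ and in checking that $\HH^{\nabla}$, $h^{\nabla}$, and the complement restrict to $\mathcal{E}$ only makes explicit what the paper leaves implicit.
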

	
\begin{remark}\label{rem:frame-bundle-anisotropic-BM}
The Brownian motion on the frame bundle considered in Theorem~\ref{thm:conv-ret-frame-bundle} corresponds to \emph{rolling without slipping or twisting} (also known as \emph{anti-developing}) $M$ along a path given by standard Brownian motion in $\mathbb{R}^{n}$. Indeed, the corresponding (Stratonovich) stochastic differential equation on $\mathcal{E}$ takes the form
\begin{align}\label{eq:anisotropic-BM-frame-bundle}
d F_{t} = \phi_{F_{t}}^{-1}\circ d B_{t} \quad \text{with initial condition $F_{0}\in \mathcal{E}$} \ ,
\end{align}
where $B_{t}$ denotes standard Brownian motion on $\mathbb{R}^{n}$ and $\phi_{F}$ is defined in~\eqref{eq:solder-form-restricted}. Notice that this equation generalizes an\-iso\-tropic (or biased) Brownian motion in Euclidean space $\mathbb{R}^{n}$ defined by the stochastic differential equation 
\begin{align}\label{eq:anisotropic-BM-Euclidean}
dX_{t}= A \circ dB_{t} \quad \text{for some fixed $A  \in \mathrm{GL}(n)$} \ ,
\end{align}
with bias given by the frame $A  \in \mathrm{GL}(n)$.
\end{remark}

\begin{figure*}[t]
	\centering
	\includegraphics[width=0.65\textwidth]{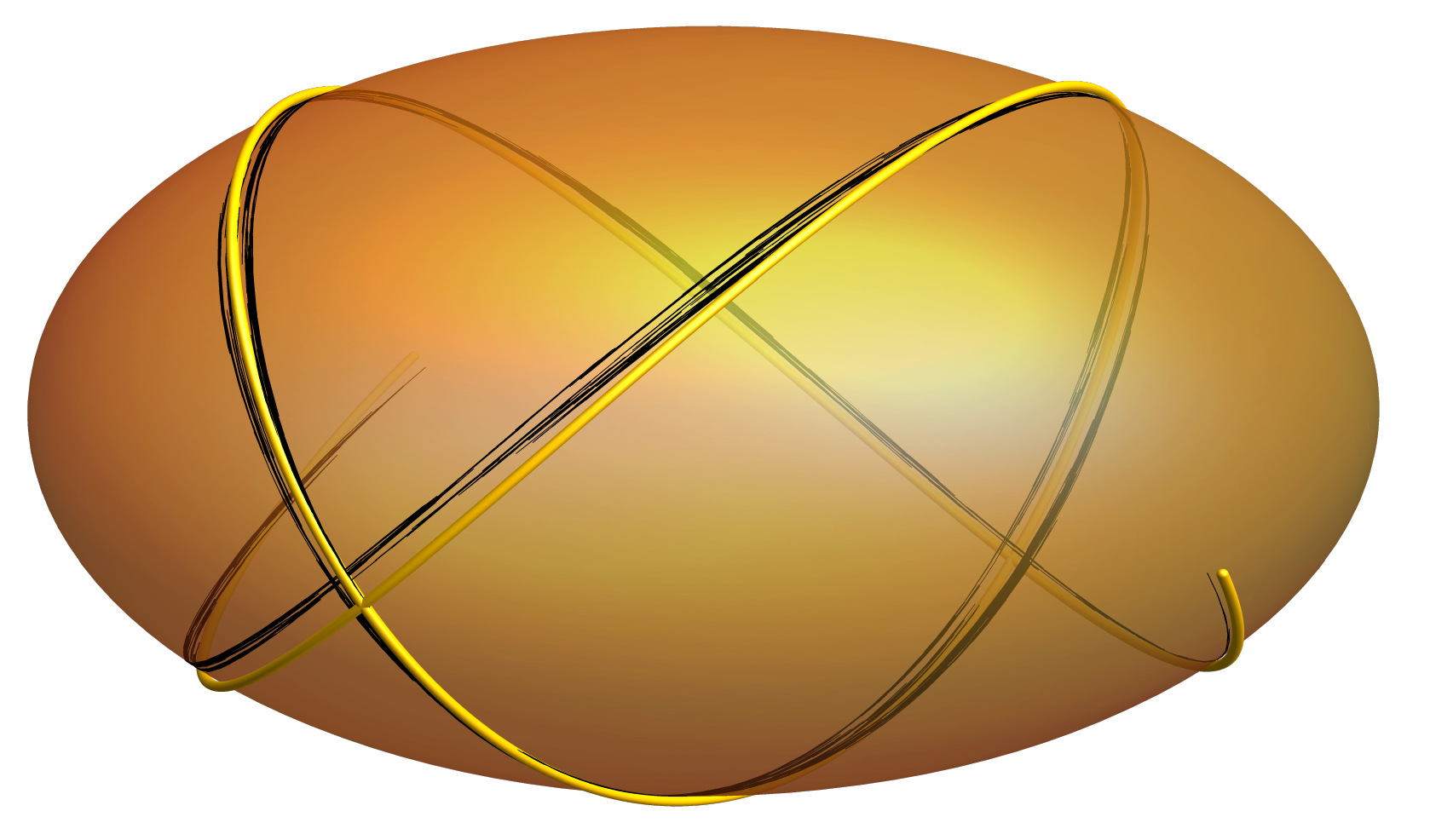}
	\caption{Anisotropic random walk resulting from Theorem~\ref{thm:conv-ret-frame-bundle}, projected back from an anisotropic frame bundle $\mathcal{E}:=\FF_{A}$ (see Definition~\ref{def:aniso-frame-bundle}) to an underlying Riemannian base manifold $M$ equipped with the Levi--Civita connection. Notice how this random walk (depicted in black with $20.000$ steps) stays close to the geodesic (depicted as a thin tube) with initial conditions corresponding to the highest anisotropy direction. For details, see ~\eqref{eq:ret-param-frame-A} from Section~\ref{sec:algo}).}
	\label{fig:anistropic-walk-ellipsoid}
\end{figure*}

The stochastic process on the frame bundle defined by~\eqref{eq:anisotropic-BM-frame-bundle} is a Markov process. However, the \emph{projected} process back to $M$ will no longer be a Markov process in general. Indeed, due to holonomy, the result of parallel transporting an initial frame $F_{0}$ around a closed path in $M$ will in general result in a frame different from $F_{0}$.
An equivalent way of viewing the failure of the projected process to be a Markov process on $M$ is to notice that 
the attendant sub-Laplacian $\Delta^{\VV}$ corresponding to the sub-Riemannian structure $(\HH^{\nabla}, h^{\nabla})$ on $\mathcal{E}$, i.e., the generator of the horizontal Brownian motion in $\mathcal{E}$, does not in general project to a meaningful operator on $M$. Figure~\ref{fig:anistropic-walk-ellipsoid} shows the result of a (highly) anisotropic random walk projected back to the base manifold $M$. Notice how the failure of this projected walk to be a Markov process on $M$ is exemplified by the two crossings at the front and back of the ellipsoid. 
For details of the attendant computation we refer to Section~\ref{sec:algo} and in particular to~\eqref{eq:ret-param-frame-A}.

\section{Computational aspects}\label{sec:algo}
Computing solutions to geodesic differential equations can be costly, which hampers an efficient algorithmic treatment of the geodesic random walks considered above. Therefore, in this section, we provide second-order retractions for the various geodesics considered in this article (i.e., normal geodesics or geodesics arising from compatible or normal connections). Then, the convergence results of Theorem~\ref{thm:convergence-generators-ret2} and Theorem~\ref{thm:convergence-processes-ret2} apply to
the resulting approximate geodesic walks. We will not explicitly repeat this fact for the three second-order retractions considered below. 
Throughout this section we work in single charts. As a notational convention, we decorate approximations of chart-based quantities with a tilde.

\subsection{A second-order retraction for normal geodesics}\label{ssec:numerics-general}
We start with a simple and explicit second-order approximation for normal geodesics:

\begin{proposition}\label{prop:2nd-order-ret-normal-geod}
	Using canonical coordinates in $T^{*}M$, let $(x(t),p(t))$ denote the normal geodesic with initial conditions $(x(0), p(0))$, i.e., the solution to the Hamiltonian ODE~\eqref{eq:normal-geod-Hamiltonian-1}. Then the curve
	\begin{align}\label{eq:simple-second-order-approx}
	\tag{$\Ret$-1}
	\tilde x^{i}(t) := x^{i}(0) + t g^{ij}(0)p_{j}(0) 
	+ \frac{1}{2} t^{2}\left(2g^{lk}(0)  \Gamma^{ij}_{k}(0) - g^{ik}(0) \Gamma^{lj}_{k}(0) \right)p_{l}(0)p_{j}(0)\ ,
	\end{align}
	with $g^{ij}$ and $\Gamma^{ij}_{k}$ defined in \eqref{eq:normal-geod-Hamiltonian-2}, provides a second-order approximation of $x(t)$ at $t=0$. 
\end{proposition}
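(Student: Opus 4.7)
The proposition is a direct Taylor-expansion statement: the candidate curve $\tilde x(t)$ is already a quadratic polynomial in $t$, so its $0^{\text{th}}$, $1^{\text{st}}$, and $2^{\text{nd}}$ derivatives at $t=0$ can be read off from the coefficients, and we just need to check that they match the corresponding derivatives of the true normal geodesic $x(t)$.

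My plan is as follows. First, I would read off directly from \eqref{eq:simple-second-order-approx} that $\tilde x^i(0)=x^i(0)$, $\dot{\tilde x}^i(0)=g^{ij}(0)p_j(0)$, and
\[
\ddot{\tilde x}^i(0)=\bigl(2g^{lk}(0)\Gamma^{ij}_{k}(0)-g^{ik}(0)\Gamma^{lj}_{k}(0)\bigr)p_l(0)p_j(0).
\]
Second, I would compute the same derivatives for the true geodesic $x(t)$ from the Hamiltonian system \eqref{eq:normal-geod-Hamiltonian-1}. The first derivative is immediate: $\dot x^i(0)=g^{ij}(0)p_j(0)$. For the second derivative, I differentiate $\dot x^i=g^{ij}(x)p_j$ in $t$ using the chain rule and then substitute $\dot x^k=g^{kl}p_l$ and $\dot p_j=-\Gamma^{kl}_j p_k p_l$, obtaining
\[
\ddot x^i=\tfrac{\partial g^{ij}}{\partial x^k}\,g^{kl}p_l p_j \;-\; g^{ij}\,\Gamma^{kl}_{j}p_k p_l.
\]
Using $\Gamma^{ij}_{k}=\tfrac12\partial g^{ij}/\partial x^k$ turns the first term into $2\Gamma^{ij}_{k}g^{kl}p_l p_j$, and a harmless relabeling of the dummy indices in the second term ($k\mapsto l$, $l\mapsto j$, $j\mapsto k$) rewrites it as $-g^{ik}\Gamma^{lj}_{k}p_l p_j$. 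Evaluating at $t=0$ gives exactly the formula for $\ddot{\tilde x}^i(0)$ above.

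Third, I conclude: since $\tilde x$ and $x$ agree in value, first derivative, and second derivative at $t=0$, Taylor's theorem yields $\tilde x^i(t)=x^i(t)+O(t^3)$, which is the claimed second-order approximation in the sense of Definition~\ref{def:second-order-agreement}.

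There is no genuine obstacle here; the only care needed is bookkeeping of indices, in particular the symmetry of $g^{ij}$ (to turn $g^{kl}$ into $g^{lk}$) and a consistent relabeling of summation indices so that the coefficient of $p_l(0)p_j(0)$ lines up with the stated expression. The fact that $\Gamma^{ij}_k=\tfrac12\partial g^{ij}/\partial x^k$ is precisely what cancels the factor of $2$ inherited from collecting the two contributions to $\partial_k g^{ij}\, g^{kl} p_l p_j$.
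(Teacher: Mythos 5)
Your proposal is correct and follows essentially the same route as the paper: differentiate $\dot x^i=g^{ij}(x)p_j$ along the flow, substitute the Hamiltonian equations, use $\Gamma^{ij}_k=\tfrac12\partial g^{ij}/\partial x^k$ together with the symmetry of $g^{ij}$ and a relabeling of dummy indices, and conclude by Taylor's formula. The index bookkeeping in your second-derivative computation checks out against \eqref{eq:simple-second-order-approx}.
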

\begin{proof}
	By definition, the normal geodesic $(x(t),p(t))$ is a solution to the Hamiltonian ODE~\eqref{eq:normal-geod-Hamiltonian-1}. Therefore, from $\dot x^{i}(t) = g^{ij}(t) p_{j}(t)$, one obtains that
	\begin{align*}
	\ddot x^{i}(t) 
	&= \dot g^{ij}(t) p_{j}(t) + g^{ij}(t) \dot p_{j}(t) \\
	&= 2\Gamma^{ij}_{k}(t) \dot x^{k}(t) p_{j}(t) + g^{ik}(t) \dot p_{k}(t) \\
	&= \left(2\Gamma^{ij}_{k}(t)g^{kl}(t) - g^{ik}(t) \Gamma^{lj}_{k}(t)\right) p_{l}(t)p_{j}(t) \ .
	\end{align*}
	The claim then follows from a direct application of Taylor's formula and by using that $g^{kl}= g^{lk}$.
\end{proof}

In order to obtain a second-order retraction with \emph{horizontal} initial conditions $(x,u)$, $u \in \HH_{x}$ from~\eqref{eq:simple-second-order-approx}, one needs to compute the requisite initial condition $p(0)$ corresponding to $u$. In other words, $p(0)$ needs to correspond to the $1$-form $\alpha_{0}\in T^{*}_{x}M$ with $\alpha_{0}^{\sharp}=u \in \HH_{x}$ and $\alpha_{0}|_{\VV}=0$ for a given complement $\VV$. This requires a linear solve in each iteration of the walk considered in Definition~\ref{def:ret-rand-walk}. This linear solve can be avoided by working with affine (e.g., compatible or normal) connections, which is what we consider next. 

\subsection{A second-order retraction for affine geodesics}\label{ssec:numerics-compatible-connection}
We now turn to second-order retractions for geodesics arising from  arbitrary affine connections. Compatible and normal connections on sub-Riemannian manifolds constitute a special case.

\begin{proposition}\label{prop:retraction-from-comp-geodesic}
	Let $M$ be a smooth manifold with affine connection $\nabla$, and let the attendant Christoffel symbols $\Gamma^{i}_{jk}$ be defined  by $\nabla_{\partial_{j}}\partial_{k}= \Gamma^{i}_{jk}\partial_{i}$. Then, given initial conditions $(x,u)$, with $u \in T_{x}M$, the curve
\begin{align}\label{eq:simple-second-order-approx-2}
	\tag{$\Ret$-2}
	\tilde x^{i}(t) := x^{i}(0) + t u^{i} - \frac 12 t^{2}\Gamma^{i}_{jk}(0)u^{j}u^{k},
	\end{align}
	provides a second-order retraction of geodesics arising from $\nabla$.
\end{proposition}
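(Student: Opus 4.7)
The plan is to compute the Taylor expansion of a $\nabla$-geodesic up to second order at $t=0$ and compare it term by term with the explicit polynomial $\tilde x(t)$ defined in~\eqref{eq:simple-second-order-approx-2}. By Remark~\ref{rem:second-order-agreement-connection}, agreement up to second order of two curves sharing initial position and velocity is equivalent to agreement of $\nabla_{\dot\gamma}\dot\gamma$ at $t=0$ for any affine connection on $M$; so the claim reduces to checking (i) the retraction conditions $\tilde x(0)=x(0)$ and $\dot{\tilde x}(0) = u$, and (ii) that $\ddot{\tilde x}^{i}(0)$ coincides with the second derivative of the genuine $\nabla$-geodesic.

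First, I would verify the retraction conditions directly from~\eqref{eq:simple-second-order-approx-2}: setting $t=0$ trivially yields $\tilde x^{i}(0)=x^{i}(0)$, and differentiating once and evaluating at $t=0$ yields $\dot{\tilde x}^{i}(0) = u^{i}$. Differentiating twice gives $\ddot{\tilde x}^{i}(0) = -\Gamma^{i}_{jk}(0)\,u^{j}u^{k}$.

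Next, I would recall that a $\nabla$-geodesic $x(t)$ with initial conditions $(x(0),u)$ satisfies, in local coordinates, the second-order ODE
\begin{equation*}
\ddot x^{i}(t) + \Gamma^{i}_{jk}(x(t))\,\dot x^{j}(t)\,\dot x^{k}(t) = 0 \ .
\end{equation*}
Evaluating at $t=0$ with $\dot x^{i}(0)=u^{i}$ gives $\ddot x^{i}(0) = -\Gamma^{i}_{jk}(0)\,u^{j}u^{k}$, which matches $\ddot{\tilde x}^{i}(0)$ computed above. Taylor's theorem in the chosen chart then yields
\begin{equation*}
x^{i}(t) = x^{i}(0) + t\,u^{i} - \tfrac{1}{2} t^{2}\,\Gamma^{i}_{jk}(0)\,u^{j}u^{k} + O(t^{3}) \ ,
\end{equation*}
whose $O(t^{2})$-truncation coincides with $\tilde x^{i}(t)$, establishing the asserted second-order agreement.

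There is no real obstacle here: the argument is a short computation once one invokes the geodesic ODE and Taylor's theorem. The only point worth a small remark is that since the definition of second-order agreement in Definition~\ref{def:second-order-agreement} is chart-based but covariantly well-defined (cf.\ Remark~\ref{rem:second-order-agreement-connection}), it suffices to carry out the verification in a single chosen chart, which is exactly what~\eqref{eq:simple-second-order-approx-2} does.
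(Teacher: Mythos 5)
Your proof is correct and is exactly the ``straightforward Taylor expansion'' that the paper invokes: expand the coordinate geodesic equation $\ddot x^{i}+\Gamma^{i}_{jk}\dot x^{j}\dot x^{k}=0$ at $t=0$ and match it against the quadratic polynomial~\eqref{eq:simple-second-order-approx-2}. The paper gives no further detail, so your write-up simply makes the same argument explicit.
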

The proof immediately follows from a straightforward Taylor expansion. 
Notice that the Christoffel symbols $\Gamma^{i}_{jk}$ arising form $\nabla$ must not be confused with the ``dual'' symbols $\Gamma^{ij}_{k}$ defined in~\eqref{eq:normal-geod-Hamiltonian-2}.

\begin{remark}
	On Riemannian manifolds,~\eqref{eq:simple-second-order-approx-2} exactly corresponds to the chart-based retraction considered in~\cite{schwarz2023efficient}.
\end{remark}

\subsection{Second-order retractions for frame bundles over Riemannian manifolds}\label{ssec:numerics-horizontal-bm} Motivated by the quest for computationally efficient versions of the random walks on the frame bundle considered in Theorem~\ref{thm:conv-ret-frame-bundle}, we here provide a retraction for framed curves arising from parallel transport. For simplicity we restrict to the case where the base manifold is \emph{Riemannian}. We require the following definition:

\begin{definition}[Anisotropic frame bundle]\label{def:aniso-frame-bundle}
Let $(M,g)$ be an oriented Riemannian manifold with Levi--Civita connection $\nabla$. Denote by $\FF_{\mathrm{SO}}$ the resulting $\mathrm{SO}(n)$-subbundle of the frame bundle $\FF$. Furthermore, given a matrix $A\in \mathrm{GL}(n)$, define the anisotropic frame bundle by 
$$
\FF_{A} := \{F\cdot A  \, | \, F \in \FF_{\mathrm{SO}}\} \subset \FF \ ,
$$
using the natural right action of $\mathrm{GL}(n)$ on $\FF$. 
\end{definition}

By definition, a frame $F: \mathbb{R}^{n}\to T_{x}M$ lies in $\FF_{\mathrm{SO}}$ if $g(F(e_{i}), F(e_{j})) = \delta_{ij}$, where $(e_{1}, \dots, e_{n})$  denotes the standard basis of $\mathbb{R}^{n}$. In local coordinates this is equivalent to requiring that 
$$
F^{T}g F = \mathrm{Id} \ , 
$$ 
where one views $F$ as a square matrix, and where $g = (g_{ij})$ denotes the metric tensor in the given local chart. 

Clearly, one has $\FF_{A}  = \FF_{\mathrm{SO}}$ whenever 
$A \in \mathrm{SO}(n)$. Moreover, since parallel transport preserves $\FF_{A}$, the sub-Riemannian structure $(\HH^{\nabla}, h^{\nabla})$ defined in Definition~\ref{def:sub-Riemannian-frame-bundle} and the compatible connection $\bar \nabla$ considered in 
Definition~\ref{def:comp-conn-frame-bundle} descent to  $\FF_{A}$, modulo the obvious modifications. 

\begin{remark}
Notice that the sub-Riemannian structure on $\FF_{A}$ is bracket-generating if and only if it is bracket-generating on $\FF_{\mathrm{SO}}$.
Moreover, for the horizontal distribution on $\FF_{\mathrm{SO}}$ to be bracket-generating is a generic (i.e., an open) condition.
E.g., for surfaces this condition is equivalent to requiring that the Gau{\ss} curvature must not vanish outside a set of measure zero. For higher dimensions, a sufficient condition is provided when the Riemann curvature operator is surjective, see, e.g., Chapter 3 in~\cite{Baudoin2004}. 
\end{remark}

For the forthcoming discussion, we find it convenient to avoid index notation.

\begin{definition}[Index-free notation]\label{def:idx-free-Christoffel}
	Given a vector field $X$ on $M$ and a local chart, let the Christoffel symbols $ \Gamma^{k}_{j}(X)$ defined by  
	$$
	\nabla_{X}\partial_{j} = \Gamma^{k}_{j}(X) \partial_{k} \ ,
	$$
	where $\nabla$ denotes the Levi--Civita connection. Then let the ``index-free'' Christoffel symbol be the square matrix field
	$
	\Gamma(X) := (\Gamma^{k}_{j}(X))
	$ with row index $k$ and column index $j$. 
\end{definition}
Of course, $\Gamma^{k}_{j}(\partial_{i})=\Gamma^{k}_{ij}$ are the usual Christoffel symbols. Let $\gamma(t)$ be an integral curve of the vector field $X$, i.e., $\dot \gamma(t) = X_{|\gamma(t)}$. Then in a local chart \emph{parallel transport} of a frame $F(t): \mathbb{R}^{n}\to T_{\gamma(t)}M$ along $\gamma$ can be expressed by the matrix ODE
\begin{align}\label{eq:parallel-trasport-index-free}
\dot F + \Gamma(X)F = 0 \ .
\end{align}

We start with a second-order horizontal retraction on $\FF_{\mathrm{SO}}$: 

\begin{proposition}\label{prop:retraction-anisotropic-bm}
	Let $(M,g)$ be an oriented Riemannian manifold, and let $\gamma$ be a geodesic with initial conditions $(\gamma(0),u)$, where $u \in T_{\gamma(0)}M$. Let $F(t)$ be the $g$-orthonormal frame resulting from parallel transporting $F(0)$ from $\gamma(0)$ along $\gamma$. Let $x(t)$ denote the chart-based coordinate expression of $\gamma(t)$, and let $\Gamma(\dot x (t))$ denote the index-free Christoffel symbol from Definition~\ref{def:idx-free-Christoffel}. Define
	\begin{align}\label{eq:ret-param-frame}
	\tag{$\Ret$-3}
	\begin{split}
	\tilde x(t) &:= x(0) + t u - \frac 12 t^{2}\Gamma(u)u \ ,  \\
	E(t)&:= \left( \mathrm{Id}- t \Gamma(u) + \frac 12 t^{2}\left(\Gamma(u)^{2}- \dot \Gamma(u)\right)\right) F(0) \ , \quad \text{where} \quad \dot \Gamma(u) = \frac{d}{dt} \Gamma(\dot x(t))\lvert_{t=0} \ , \\ 
	\tilde F(t)&:= E(t) S(t)^{-1}\ , \quad \text{where} \quad S(t) := \left( E (t)^{T}\tilde g(t) E(t) \right)^{\frac 12} \ ,
	\end{split}
	\end{align}
	and where $\tilde g(t)$ denotes the metric tensor at the point $\tilde x(t)$. Then $\tilde F(t)$ is a $g$-orthonormal frame at $\tilde x(t)$. Moreover, the curve $(\tilde x (t), \tilde F(t))$ is a second-order approximation of the curve $(x (t), F(t))$ at $t=0$.
\end{proposition}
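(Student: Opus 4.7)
The plan is to handle the two assertions separately. For the orthonormality claim, observe that at $t=0$ we have $E(0)^T\tilde g(0) E(0) = F(0)^T g(x(0)) F(0) = \mathrm{Id}$, since $F(0)$ is $g$-orthonormal by assumption. By continuity, $E(t)^T\tilde g(t) E(t)$ is symmetric positive definite for sufficiently small $t$, so its unique symmetric positive-definite square root $S(t)$ is well-defined and depends smoothly on $t$. The defining identity $S(t)^2 = E(t)^T\tilde g(t) E(t)$ then gives
\begin{equation*}
\tilde F(t)^T \tilde g(t) \tilde F(t) = S(t)^{-1} S(t)^2 S(t)^{-1} = \mathrm{Id},
\end{equation*}
establishing that $\tilde F(t)$ is a $g$-orthonormal frame at $\tilde x(t)$.

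For the second-order agreement of $(\tilde x, \tilde F)$ with $(x, F)$, I would first invoke Proposition~\ref{prop:retraction-from-comp-geodesic} applied to the Levi--Civita connection, which immediately yields $\tilde x(t) = x(t) + O(t^3)$ in the chart. Next I would Taylor-expand $F(t)$ directly from the parallel-transport ODE $\dot F + \Gamma(\dot x) F = 0$: evaluating at $t=0$ gives $\dot F(0) = -\Gamma(u) F(0)$, and differentiating once more and substituting gives $\ddot F(0) = \bigl(\Gamma(u)^2 - \dot\Gamma(u)\bigr) F(0)$. Comparing with the definition of $E(t)$ in~\eqref{eq:ret-param-frame} shows $F(t) = E(t) + O(t^3)$.

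To conclude, I would combine these expansions using that parallel transport with respect to the Levi--Civita connection preserves $g$-orthonormality, i.e., $F(t)^T g(x(t)) F(t) = \mathrm{Id}$ identically in $t$. Since $E(t) = F(t) + O(t^3)$ and, by smoothness of $g$ together with the previous step, $\tilde g(t) = g(x(t)) + O(t^3)$, one obtains $S(t)^2 = \mathrm{Id} + O(t^3)$. Smoothness of the matrix square root and inversion near $\mathrm{Id}$ then yields $S(t)^{-1} = \mathrm{Id} + O(t^3)$, and hence $\tilde F(t) = E(t) S(t)^{-1} = F(t) + O(t^3)$. Viewing frames through the chart-induced matrix coordinates on $\FF$, this is exactly second-order agreement in the sense of Definition~\ref{def:second-order-agreement}. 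The main subtlety, rather than a genuine obstacle, is the bookkeeping of $O(t^3)$ errors across three layers---chart coordinates, metric tensor, and matrix square root---each elementary in isolation but which must be combined so that the correction factor $S(t)^{-1}$ is recognized as a higher-order perturbation of the identity that does not spoil the Taylor approximation provided by $E(t)$.
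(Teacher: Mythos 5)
Your proposal is correct and follows essentially the same route as the paper: Taylor expansion of $F$ from the parallel-transport ODE to get $E(t)=F(t)+O(t^{3})$, the observation $\tilde g(t)=g(x(t))+O(t^{3})$, and the conclusion $S(t)^{-1}=\mathrm{Id}+O(t^{3})$ so that the normalization does not spoil second-order agreement. The only cosmetic difference is in the orthonormality step, where the paper passes through the polar decomposition of $\tilde g^{1/2}(t)E(t)$ while you verify $\tilde F(t)^{T}\tilde g(t)\tilde F(t)=\mathrm{Id}$ directly from $S(t)^{2}=E(t)^{T}\tilde g(t)E(t)$ and the symmetry of $S(t)$; both are valid.
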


\begin{remark}
For an implementation of \eqref{eq:ret-param-frame} (and \eqref{eq:ret-param-frame-A} below), it is useful to note that 
\begin{align*}
\Gamma(u) &= u^{i}\Gamma_{ij}^{k} \, dx^{j}\otimes \partial_{k} \ , \\
\dot \Gamma(u) &= u^au^b\bigl(\Gamma_{aj,b}^k-\Gamma^i_{ab}\Gamma_{ij}^k\bigr) \, dx^{j}\otimes \partial_{k} \ ,
\end{align*}
where $\Gamma_{aj,b}^k = \partial_{b} \Gamma_{aj}^k$. The above expression for $\dot \Gamma(u)$ follows from differentiating $\Gamma(\dot x(t))$ (using Definition~\ref{def:idx-free-Christoffel}) with respect to $t$. 
These expressions apply to both the scenario where Christoffel symbols are known symbolically (i.e., arise from symbolic differentiation of the metric tensor) and to the scenario where Christoffel symbols are only available numerically. In the latter (numerical) case, one needs to compute Christoffel symbols with a second-order scheme, since this provides first-order accuracy for the spatial derivatives $\Gamma_{aj,b}^k$. Notice that this accuracy suffices for guaranteeing that \eqref{eq:ret-param-frame} remains a second-order retraction of the curve $(x(t), F(t))$.
\end{remark}	

\begin{proof}[Proof of Proposition~\ref{prop:retraction-anisotropic-bm}]
	Clearly, $\tilde x(t)$ is a second-order approximation of $x(t)$. We next show that $\tilde F(t)$ is a $g$-orthonormal frame at $\tilde x(t)$. To this end, consider the polar decomposition of $\tilde g^{\frac 12}(t) E (t)$, i.e., 
	$$
	\tilde g^{\frac 12}(t)  E (t) = R(t) S(t) \ , \quad \text{where} \quad \text{$R(t)\in O(n)$ and $S(t) = S(t)^{T}$.}
	$$
	Then $S(t)$ indeed satisfies $S(t) = \left( E (t)^{T}\tilde g(t) E(t) \right)^{\frac 12}$, and hence $\tilde F(t)= E(t) S(t)^{-1} = \tilde g^{-\frac 12}(t) R(t)$ satisfies $\tilde F(t)^{T}\tilde g(t) \tilde F(t) = \mathrm{Id}$. 
	It remains to show that $\tilde F(t)$ is a second-order approximation of the parallel frame $F(t)$. Equation~\eqref{eq:parallel-trasport-index-free} implies $\dot F(t) + \Gamma(\dot x (t))F(t) = 0$, and it follows that 
	$$
	{E(t) = F(0) + t \dot F (0) + \frac 12 t^{2}\ddot F(0) \ .}
	$$
	Therefore, $E(t)$ is a second-order approximation of $F(t)$. Let $g(t)$ denote the metric tensor at the point $x(t)$. Then $F(t)^{T} g(t) F(t) = \mathrm{Id}$. Since $\tilde x(t)$ is a second-order approximation of $x(t)$, one obtains that $g(t)- \tilde g(t) = \mathcal{O}(t^{3})$. This, together with the fact that $E(t)$ is a second-order approximation of $F(t)$, shows that $S(t) = \mathrm{Id} + {\mathcal{O}(t^{3})}$, and thus $S(t)^{-1} = \mathrm{Id} + {\mathcal{O}(t^{3})}$. Hence, $\tilde F(t)= E(t) S^{-1}(t)$ is indeed a second-order approximation of $F(t)$.
\end{proof}

A slight modification of Retraction \eqref{eq:ret-param-frame} results in a second-order retraction on $\FF_{A}$:

\begin{corollary}\label{cor:retraction-anisotropic-bm'}
    Let $(M,g)$ be an oriented Riemannian manifold, and let $\gamma$ be a geodesic with initial conditions $(\gamma(0),u)$, where $u \in T_{\gamma(0)}M$. Let $F(t)\in\mathcal F_A$ resulting from parallel transporting $F(0)\in\mathcal F_A$ from $\gamma(0)$ along $\gamma$. Let $x(t)$ denote the chart-based coordinate expression of $\gamma(t)$, and let $\Gamma(t):= \Gamma(\dot\gamma(t))$. Define
	\begin{align}\label{eq:ret-param-frame-A}
	\tag{$\Ret$-3'}
	\begin{split}
	\tilde x(t) &:= x(0) + t u - \frac 12 t^{2}\Gamma(u)u \ ,   \\
	E_A(t)&:= \left( \mathrm{Id}- t \Gamma(u) + \frac 12 t^{2}\left(\Gamma(u)^{2}- \dot \Gamma(u)\right)\right) F(0)A^{-1} \ ,\\ 
	\tilde F_A(t)&:= E_{A}(t)S_A(t)^{-1}A\ , \quad \text{where} \quad S_A(t) := \left(E_{A} (t)^{T}\tilde g(t) E_{A}(t) \right)^{\frac 12} \ ,
	\end{split}
	\end{align}
	and where $\tilde g(t)$ denotes the metric tensor at the point $\tilde x(t)$. Then $\tilde F_A(t)\in\mathcal F_A$ at $\tilde x(t)$. Moreover, the curve $(\tilde x (t), \tilde F_A(t))$ is a second-order approximation of the curve $(x (t), F(t))$ at $t=0$.
\end{corollary}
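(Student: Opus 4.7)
The plan is to reduce the corollary to Proposition~\ref{prop:retraction-anisotropic-bm} by exploiting the right-equivariance of parallel transport under the $\mathrm{GL}(n)$-action. Since $F(0) \in \FF_{A}$, the frame $F_{0} := F(0)A^{-1}$ lies in $\FF_{\mathrm{SO}}$, i.e., it is $g$-orthonormal at $\gamma(0)$. Parallel transport along $\gamma$ is linear in the initial frame, and hence commutes with the right action of $A$: if $F^{\mathrm{SO}}(t)$ denotes the parallel transport of $F_{0}$ along $\gamma$, then $F(t) = F^{\mathrm{SO}}(t)\, A$ is the parallel transport of $F(0)$ along $\gamma$.

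Next I would compare the quantities in~\eqref{eq:ret-param-frame-A} with those in~\eqref{eq:ret-param-frame} applied to $F_{0}$. By construction, the matrix polynomial multiplying $F(0)A^{-1}$ in the definition of $E_A(t)$ is exactly the polynomial multiplying $F_{0}$ in $E(t)$ of Proposition~\ref{prop:retraction-anisotropic-bm}. Hence $E_{A}(t) = E(t)$ and consequently $S_{A}(t) = S(t)$, which in turn gives
\begin{align*}
\tilde F_{A}(t) = E_{A}(t)\, S_{A}(t)^{-1}A = \bigl(E(t)\, S(t)^{-1}\bigr) A = \tilde F(t)\, A \ ,
\end{align*}
where $\tilde F(t)$ is the second-order orthonormal retraction of $F^{\mathrm{SO}}(t)$ provided by Proposition~\ref{prop:retraction-anisotropic-bm}.

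From here the two claims follow immediately. For membership in $\FF_{A}$: since $\tilde F(t)$ is $g$-orthonormal at $\tilde x(t)$ by Proposition~\ref{prop:retraction-anisotropic-bm}, the frame $\tilde F(t)$ lies in $\FF_{\mathrm{SO}}$, and therefore $\tilde F_{A}(t) = \tilde F(t)A \in \FF_{A}$ by Definition~\ref{def:aniso-frame-bundle}. For the order of approximation: Proposition~\ref{prop:retraction-anisotropic-bm} gives $\tilde F(t) = F^{\mathrm{SO}}(t) + \mathcal{O}(t^{3})$, and multiplying on the right by the constant matrix $A$ preserves this estimate, so
\begin{align*}
\tilde F_{A}(t) = \tilde F(t)A = F^{\mathrm{SO}}(t)A + \mathcal{O}(t^{3}) = F(t) + \mathcal{O}(t^{3}) \ .
\end{align*}
Combined with $\tilde x(t) = x(t)+\mathcal{O}(t^{3})$, which is already established in Proposition~\ref{prop:retraction-anisotropic-bm}, this yields second-order agreement of $(\tilde x(t), \tilde F_{A}(t))$ with $(x(t), F(t))$ at $t=0$.

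There is essentially no obstacle here beyond being careful that the polarization and symmetric-square-root construction defining $\tilde F_A$ is covariant under the right $A$-translation; this is automatic because the ambient metric $\tilde g(t)$ entering $S_A$ is independent of $A$ and because the construction $X\mapsto X\bigl(X^{T}\tilde g\, X\bigr)^{-1/2}$ is the $g$-polar projection, which is right-$\mathrm{SO}$-equivariant in a way that precisely cancels the replacement $F(0) \leftrightarrow F(0)A^{-1}$ performed inside $E_A$.
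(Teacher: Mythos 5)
Your proposal is correct and follows essentially the same route as the paper: apply Proposition~\ref{prop:retraction-anisotropic-bm} to the orthonormal frame $F(0)A^{-1}$, observe that $\tilde F_A(t)A^{-1}$ is precisely the resulting $\tilde F(t)$, and transfer both conclusions by right-multiplication with the constant matrix $A$. The paper's proof is just a two-line compression of exactly this argument.
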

\begin{proof}
    Since $F(0)A^{-1}\in\mathcal F_{\mathrm{SO}}$, Proposition \ref{prop:retraction-anisotropic-bm} yields that $\tilde F_A(t)A^{-1}\in\mathcal F_{\mathrm{SO}}$ is a second-order approximation of $F(t)A^{-1}$. Hence, $\tilde F_{A}(t)$ is a second-order approximation of $F(t)$. 
\end{proof}	

Figure~\ref{fig:anistropic-walk-ellipsoid} provides an example of a retraction-based random walk using~\eqref{eq:ret-param-frame-A}. 
In this figure, $M$ is the parametric surface defined by $\phi(s,t)= (\cos(s) \sin(t), \sin(s) \sin(t), 1.5\cos(t))$, equipped with the induced Riemannian metric from ambient $\mathbb{R}^{3}$. Using an implementation in \textsc{Mathematica} on a standard laptop, the computation takes a few seconds for $20.000$ steps and stepsize $\varepsilon=0.05$.

	\printbibliography
	
\end{document}